\theoremstyle{plain}
\newtheorem{theorem}{Theorem}[section]
\newtheorem{corollary}[theorem]{Corollary}
\newtheorem{lemma}[theorem]{Lemma}
\newtheorem{proposition}[theorem]{Proposition}
\theoremstyle{definition}
\newtheorem{definition}{Definition}
\theoremstyle{remark}
\newtheorem{remark}{Remark}
\title{Infinite-server systems with \\
Hawkes arrivals and Hawkes services
\thanks{\textit{\underline{Citation}}: 
\textbf{Dharmaraja Selvamuthu and Paola Tardelli. 
Infinite-server Systems with
Hawkes Arrivals and Hawkes Services. 
To appear on Queueing Systems.}} 
}
\author{
  Dharmaraja Selvamuthu \\
  Department of Mathematics\\
  Indian Institute of Technology Delhi\\
  Hauz Khas, New Delhi-110016, India \\
  \texttt{dharmar@maths.iitd.ac.in} \\
   \And
  Paola Tardelli  \\
  Department of Industrial and \\ Information Engineering and Economics \\
  University of L'Aquila \\
  Piazzale E. Pontieri, Roio, 67100, Italy \\
  \texttt{paola.tardelli@univaq.it} \\
}
\begin{document}
\maketitle

\begin{abstract}
This paper is devoted to the study of the number of customers in  infinite-server systems driven by Hawkes processes.
In these systems, the self-exciting arrival process is assumed to be represented by a Hawkes process 
and the self-exciting service process by a state-dependent Hawkes process (sdHawkes process). 
Under some suitable conditions, for the $Hawkes/sdHawkes/\infty$ system, 
the Markov property of the system is derived. 
The joint time-dependent distribution of the number of customers in the system, 
the arrival intensity and the server intensity is characterised by a system of differential equations.
Then, the time-dependent results are also deduced for the   
 $M/sdHawkes/\infty$ system.
\end{abstract}

\keywords{Infinite-server systems  \and Hawkes processes  \and State-dependent \and Markov property \and Self-exciting}
{\bf Mathematics Subject Classification (2010)} \quad 60K25 $\cdot$ 60G55 $\cdot$ 60K30 $\cdot$ 60J75 $\cdot$ 60G46

\section{Introduction}
\label{intro}
In fields like probability, operations research, management science, and industrial engineering, there are phenomena that are well described by infinite-server systems. Many authors, in literature,
assume that in most queueing systems 
the arrival process of customers is a Poisson process and 
the service times have an exponential distribution. 
This implies that the future evolution of the system only depends on the present situation. 

However, in real-world problems, 
the future evolution of the arrival process and/or the service process is 
often influenced by the past events. 
This is essentially observed, during periods of suffering, for instance earthquakes, crime and riots 
and social media trend (Daw and Pender 
\cite{DP}, 
and 
Rizoiu et al. 
\cite{RXSCYH}) and in the financial market, incorporating some kind of contagion effect
(A{\"\i}t-Sahalia et al. 
\cite{ait2015}). 
Recently, the same phenomenon is observed to describe the temporal growth and migration of COVID-19 
(see  Chiang et al. 
\cite{CLM2021}, Escobar 
\cite{Escobar2020},  Garetto et al. 
\cite{GLT2021}).
The work by Eick et al. 
\cite{EMW}, and recent works by Daw and Pender 
\cite{DP} and Koops et al. 
\cite{KSBM}, 
motivated us to consider the service process as a state-dependent Hawkes (sdHawkes) process.

This 
article studies an infinite-server system wherein the dynamics of the arrival process and the service process are governed by Hawkes processes. 
The self-exciting property of Hawkes process is well known for its efficiency in reproducing jump clusters due to the dependence of the conditional event arrival rate, or intensity, on the counting process itself. 
It can be considered that 
this type of infinite-server system has the potential to represent, for instance, the evolution of the number of people visiting a shopping center or the number of clients visiting a website. 
In both these cases, the intensity function for the arrival process cannot be considered as a constant and,  the rate of new arrival events increases with the occurrence of each event. 
Similarly, it is  observed that the intensity function for the service process may be state-dependent and every new service completion excites further service process.

Errais et al. 
\cite{EGG2010} derived
the Markov property for the two-dimensional process consisting of 
a Hawkes process and its intensity having  exponential kernel. 
We generalize their approach investigating 
the Markov property for the three-dimensional process consisting of the number of customers in the system, 
the arrival intensity and the service intensity. 

To achieve this, we have a problem that must be solved. On a filtered probability space,  $(\Omega, \mathcal{F},\{{\mathcal F}_t\}_{t \geq 0},P)$, we consider the filtrations generated by the arrival process, 
$\{{\mathcal F}^M_t\}_{t \geq 0}$, the filtrations generated by the service process 
$\{{\mathcal F}^S_t\}_{t \geq 0}$, and the filtrations generated by  the number of the customers in the system $\{{\mathcal F}^N_t\}_{t \geq 0}$. 
Then, we note that ${\mathcal F}^N_t \subseteq {\mathcal F}^M_t \vee 
{\mathcal F}^S_t$, which implies that every ${\mathcal F}^M_t \vee 
{\mathcal F}^S_t$-martingale is a 
${\mathcal F}^N_t$-martingale. 
But, the vice versa is not always true, namely,  a 
${\mathcal F}^N_t$-martingale is not always a ${\mathcal F}^M_t \vee 
{\mathcal F}^S_t$-martingale, in general. This is the classical problem of the enlargement of filtrations. 
An assumption in this framework is the 
Immersion property, 
which allows us to get that any 
${\mathcal F}^N_t$-martingale is also a  ${\mathcal F}^M_t \vee 
{\mathcal F}^S_t$-martingale (see 
 Bielecki et al. 
\cite{BJR2006}, Jeanblanc et al.
\cite{JYC2009}, 
and Tardelli 
\cite{T2017}). 
Consequently, following the existing literature, the Immersion property is assumed in the present paper.


Generalizing some results of Daw and Pender \cite{DP} and Koops et al. \cite{KSBM}, we derive the transient or time-dependent behaviour of the infinite-server system with Hawkes arrivals and sdHawkes services. 
This is the main contribution of the present paper.

The paper is organized as follows. In Section 2, we introduce Hawkes processes and their properties. 
In Section 3, we describe an infinite-server system with Hawkes arrivals and sdHawkes services.
We discuss the moments of the arrival process and its intensity.  We derive the Markov property of the process describing the number of customers in the system using
the Immersion property. 
In Section 4, we obtain
the joint transient or time-dependent distribution of the system size, the arrival and the service intensity processes for 
the $Hawkes/sdHawkes/\infty$ system. 
Afterwards, we deduce the above time-dependent results  for a  
$M/sdHawkes/\infty$
system. 
Finally, in Section 5, we present concluding remarks.

\section{Hawkes Processes}

Hawkes processes constitute a particular class of multivariate point processes having numerous applications throughout science and engineering. 
These are processes characterised by a particular form of stochastic intensity vector, 
that is, the intensity depends on the sample path of the point process. 

Let $(\Omega, \mathcal{F},\{{\mathcal F}_t\}_{t \geq 0},P)$ be a filtered probability space, 
where $\{{\mathcal F}_t\}_{t \geq 0}$ is a given filtration satisfying the usual conditions. On this  space, a counting process $M = \{M_t, t \geq 0 \}$ is defined, and $\{{\mathcal F}^M_t\}_{t \geq 0}$, where 
$\mathcal{F}^M_t = \sigma\{M_u, u \leq t\}$, is  its associated filtration, 
and stands for the information available up to 
time $t$. 
\bigskip

\begin{definition}
The conditional law of $M$ is defined, for 
$\Delta t \rightarrow 0$,
as
$$P[M_{t+\Delta t} - M_t = m | {\cal F}^M_t] = 
\left\{ 
\begin{array}l 
\lambda_t \Delta t + o(\Delta t) \\
o(\Delta t) \\
1 - \lambda_t \Delta t + o(\Delta t) 
\end{array} 
\quad 
\begin{array}l 
m = 1 \\
m > 1\\
m = 0.
\end{array} 
\right.$$
For a Hawkes process, the intensity $\lambda_t$ is a function of the past jumps of the process itself, and, in general, assumes the representation
\begin{equation}
\label{generallambda}
\lambda_t = \lambda_0 +\int_0^t \varphi(t-u) dM_u.
\end{equation}
The function $\varphi(\cdot)$, called {\it excitation function}, is such that $\varphi(t) \geq 0$, for $t \geq 0$. It represents the size of the impact of jumps and belongs to the space of $L^1$-integrable functions. 
\end{definition}
\bigskip

When $\varphi(t)=0$, $M$ is a counting process with constant intensity.
This means that a Poisson process is obtained as a special case of a Hawkes process.

Following the existing literature
(see Daw and Pender 
\cite{DP}),  we restrict our attention to a function 
$\varphi(\cdot)$ defined by an  exponential decay kernel.
To this end, 
let the arrival intensity be governed by the dynamics
\begin{equation}
\label{dynamicsoflambda}
d\lambda_t = r (\lambda^* - \lambda_t) dt + B_t dM_t,
\end{equation}
where 
$\lambda^*$ represents an underlying stationary arrival rate, called baseline intensity.
The constant $r > 0$ describes the decay of the intensity as time passes after an arrival,  
and $B_t$, for each $t \geq 0$, is a positive random variable representing  the size of the jump in the intensity upon an arrival.
The solution to Equation (\ref{dynamicsoflambda}), 
given $\lambda_0$, which is the initial value of $\lambda_t$, is obtained as
\begin{equation}
\label{lambda}
\lambda_t = \lambda^* + e^{-rt} (\lambda_0 - \lambda^*) + \int_0^t B_s  e^{-r (t - s)} dM_s.
\end{equation}
Taking $\{t_i\}_{i >0}$ as the sequence of the jump times of $M$, 
and if the self-excting term is such that $\varphi(t - t_i) \equiv B_{t_i} e^{-r(t-t_i)} $, 
then Equation (\ref{generallambda}) coincides with Equation (\ref{lambda}).

\begin{remark}
Note that, the Hawkes process $M$ 
itself does not have the Markov property. However, assuming that the 
excitation function $\varphi(\cdot)$ has an exponential decay kernel, it is possible to prove that the bi-dimensional process $(\lambda, M)$ is a Markov process (Errais et al. \cite{EGG2010}). 
Furthermore,  the explosion is avoided by ensuring the condition given by $\mathbb E[B_t] < r$, (Daw and Pender  \cite{DP}). 

\end{remark}

\section{The Model:  $Hawkes/sdHawkes/\infty$ System}

The arrival process  of the system is the  Hawkes process $M$ 
acting as an input process to an infinite-server system, having arrival intensity 
given by Equation (\ref{lambda}), and $\{t_i\}_{i \geq 0}$ as  the sequence of the  arrival times. 

For the service requirement, we consider $S = \{S_t, t \geq 0\}$, another Hawkes process having serving intensity $\mu_t$, 
with initial intensity $\mu_0 > 0$, baseline intensity $\mu^*$ and, 
exponential excitation function. 
Taking $N_t$ as the number of customers in the system at $t$, 
for a constant $s > 0$, and for $t > 0$, given by
\begin{equation}
\label{mu}
\mu_t = N_t \left( \mu^* + e^{-st} (\mu_0 - \mu^*) + \int_0^t C_u  e^{-s (t - u)} dS_u \right).
\end{equation}
For $i>0$,  
$\tau_i$ denotes the time epoch 
of $i$th customer departure 
after the service completion, 
and $C_{\tau_i}$ is a positive random variable representing  the size of that jump.  
The 
number of customers in the system whose service is completed on, or before, time $t$ is given by $S_t$ 
and $N_t = M_t - S_t$.

This form of the serving intensity $\mu_t$ allows us to take into account the crowding of the system, which is the number of customers in the system at time $t$,
and  about the experience gained by the server process, which is given by the Hawkes structure. 
This is called as a {\it state-dependent Hawkes process}, sdHawkes process 
(Li and Cui 
\cite{LC2020} and Morario-Patrichi and Pakkanen 
\cite{MPP2018}). 

To the best of our knowledge, this is the first time in literature that Hawkes processes are introduced to model 
the experience in arrivals and services of an infinite-server system.
Furthermore, we note that, at time $t$, the sdHawkes process   for service will start as new, whenever the number of customers in the system become one, i.e., $N_t =1$. 
Therefore, we call this queueing model as 
$Hawkes/sdHawkes/\infty$ in Kendall's notation. 

Since the service process is state-dependent, 
for the stability of the infinite-server system (Daw and Pender 
\cite{DP}), only the stability for the arrival process is needed, 
which means that 
$\mathbb E[B_t] <  r$, for all $t$.

\subsection{Moments of $M_t$ and $\lambda_t$}

This subsection is devoted to 
derive the results regarding the moments of the process $M$ and its intensity $\lambda$.  These results can be proved  
taking into account the results of Dassios and Zhao 
\cite{DZ}
and generalizing those obtained in Daw and Pender 
\cite{DP}, Section 2.
\bigskip

\begin{proposition}
Given a Hawkes process $(M_t, \lambda_t)$,  with dynamics given by Equation (\ref{lambda}) 
and $\mathbb E[B_t] <  r$, for all $t \geq 0$,
\begin{eqnarray}
\label{Elambdat}
\mathbb E[\lambda_t] 
&=& \frac{r \lambda^*}{r - \mathbb E[B_t]} 
+ \left(\lambda_0 -  \frac{r \lambda^*}{r - \mathbb E[B_t]}  \right) e^{ - t \left(r - \mathbb E[B_t]\right) },
\\
\label{Varlambdat}
Var[\lambda_t] 
&=& \frac{(\mathbb E[B_t])^2}{r - \mathbb E[B_t]} 
\left[\left( \frac{r \lambda^*}{2 \left(r - \mathbb E[B_t] \right)}  - \lambda_0\right) e^{ - 2 t \left(r - \mathbb E[B_t]\right) }
- \left(  \frac{r \lambda^*}{r - \mathbb E[B_t]} - \lambda_0  \right) e^{ - t \left(r - \mathbb E[B_t]\right) }
+ \frac{r \lambda^*}{2 \left(r - \mathbb E[B_t] \right)} \right],
\\
\label{EMt}
\mathbb E[M_t] 
&=& \frac{1}{r - \mathbb E[B_t]} \left[ r \lambda^* t 
+  \left(\lambda_0 -  \frac{r \lambda^*}{r - \mathbb E[B_t]}  \right) 
\left(1 - e^{ - t \left(r - \mathbb E[B_t]\right) } \right) \right],
\\
\label{VarMt}
Var[M_t] 
&=&  \frac{1}{(r - \mathbb E[B_t])^3} \left\{ r^3 \lambda^* t  
+ r^2 \left( \lambda_0 - \frac{r \lambda^*}{2 \left(r - \mathbb E[B_t] \right)} \right) 
\left(1 - e^{ - 2t \left(r - \mathbb E[B_t]\right) } \right) 
\right.
\\
&&  
- 2r \mathbb E[B_t] (r - \mathbb E[B_t]) \left( \lambda_0 - \frac{r \lambda^*}{r - \mathbb E[B_t]} \right) 
t e^{ - t \left(r - \mathbb E[B_t]\right) } 
\nonumber\\
&& 
\left.
+ 
\left( \left(r^2 - (\mathbb E[B_t] )^2 \right) \lambda_0 
-  r \lambda^* \frac{r^2 - (\mathbb E[B_t] )^2+ 2r \mathbb E[B_t]}{\left(r - \mathbb E[B_t] \right)} \right)  
\left(1 - e^{ - t \left(r - \mathbb E[B_t]\right) } \right) 
\right\}.
\nonumber
\end{eqnarray}
\end{proposition}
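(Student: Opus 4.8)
The plan is to exploit the Markov property of the pair $(\lambda_t, M_t)$ noted in the Remark above, which lets one turn each moment into the solution of a linear ODE; equivalently, one argues directly from the dynamics (\ref{dynamicsoflambda}) together with the compensator identity $\mathbb{E}[dM_t \mid \mathcal{F}_{t-}] = \lambda_t\, dt$. Throughout, $B_t$ is taken independent of $\mathcal{F}_{t-}$ with (essentially deterministic) mean $\mathbb{E}[B_t]$, so that $\mathbb{E}[B_t\, dM_t] = \mathbb{E}[B_t]\,\mathbb{E}[\lambda_t]\, dt$ and, for the second-order quantities, $\mathbb{E}[B_t^2\, dM_t] = (\mathbb{E}[B_t])^2\,\mathbb{E}[\lambda_t]\, dt$, consistently with the form of the stated expressions. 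First I would handle $\mathbb{E}[\lambda_t]$: taking expectations in (\ref{dynamicsoflambda}) gives the linear ODE
\begin{equation*}
\frac{d}{dt}\mathbb{E}[\lambda_t] = r\lambda^* - (r - \mathbb{E}[B_t])\,\mathbb{E}[\lambda_t], \qquad \mathbb{E}[\lambda_0] = \lambda_0 ,
\end{equation*}
whose solution, via the integrating factor $e^{t(r - \mathbb{E}[B_t])}$ and the stability hypothesis $r - \mathbb{E}[B_t] > 0$, is (\ref{Elambdat}). Since $M_t - \int_0^t \lambda_s\, ds$ is a martingale, $\mathbb{E}[M_t] = \int_0^t \mathbb{E}[\lambda_s]\, ds$, and integrating (\ref{Elambdat}) term by term yields (\ref{EMt}).

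For $Var[\lambda_t]$ I would apply It\^o's formula to $\lambda_t^2$, noting that the only contribution to the quadratic variation is $(\Delta\lambda_t)^2 = B_t^2\, dM_t$ at arrival times; taking expectations gives $\frac{d}{dt}\mathbb{E}[\lambda_t^2] = (2r\lambda^* + (\mathbb{E}[B_t])^2)\mathbb{E}[\lambda_t] - 2(r - \mathbb{E}[B_t])\,\mathbb{E}[\lambda_t^2]$. Subtracting $\frac{d}{dt}(\mathbb{E}[\lambda_t])^2 = 2\mathbb{E}[\lambda_t](r\lambda^* - (r - \mathbb{E}[B_t])\mathbb{E}[\lambda_t])$ cancels the $r\lambda^*$ terms and leaves the clean linear ODE
\begin{equation*}
\frac{d}{dt}Var[\lambda_t] = -2(r - \mathbb{E}[B_t])\,Var[\lambda_t] + (\mathbb{E}[B_t])^2\,\mathbb{E}[\lambda_t], \qquad Var[\lambda_0] = 0 ;
\end{equation*}
inserting (\ref{Elambdat}) and integrating (the forcing term produces the $e^{-t(r-\mathbb{E}[B_t])}$ and $e^{-2t(r-\mathbb{E}[B_t])}$ contributions) gives (\ref{Varlambdat}).

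The last identity, $Var[M_t]$, is where the real work lies. I would apply It\^o's formula to $M_t^2$ (using $(\Delta M_t)^2 = \Delta M_t$) and to the product $M_t\lambda_t$ (whose cross-jump term is $\Delta M_t\,\Delta\lambda_t = B_t\, dM_t$), and take expectations to obtain the closed cascade
\begin{align*}
\frac{d}{dt}\mathbb{E}[M_t\lambda_t] &= r\lambda^*\,\mathbb{E}[M_t] - (r - \mathbb{E}[B_t])\,\mathbb{E}[M_t\lambda_t] + \mathbb{E}[\lambda_t^2] + \mathbb{E}[B_t]\,\mathbb{E}[\lambda_t], \\
\frac{d}{dt}\mathbb{E}[M_t^2] &= 2\,\mathbb{E}[M_t\lambda_t] + \mathbb{E}[\lambda_t],
\end{align*}
with zero initial data. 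The right-hand sides contain only the already-computed $\mathbb{E}[M_t]$, $\mathbb{E}[\lambda_t]$ and $\mathbb{E}[\lambda_t^2]$, so one first integrates the equation for $\mathbb{E}[M_t\lambda_t]$, then the one for $\mathbb{E}[M_t^2]$, and finally forms $Var[M_t] = \mathbb{E}[M_t^2] - (\mathbb{E}[M_t])^2$. The hard part will be the bookkeeping in this last step: the forcing term proportional to $e^{-t(r - \mathbb{E}[B_t])}$ is in resonance with the homogeneous decay rate $r - \mathbb{E}[B_t]$, which is what produces the $t\,e^{-t(r - \mathbb{E}[B_t])}$ term in (\ref{VarMt}), and one must keep track of the constant, the linear-in-$t$, the $e^{-t(r - \mathbb{E}[B_t])}$, the $t\,e^{-t(r - \mathbb{E}[B_t])}$ and the $e^{-2t(r - \mathbb{E}[B_t])}$ pieces and identify the cancellations created by subtracting $(\mathbb{E}[M_t])^2$ to land on the stated compact form. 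A convenient sanity check is the stationary regime: as $t \to \infty$, $\mathbb{E}[\lambda_t] \to r\lambda^*/(r - \mathbb{E}[B_t])$ and $Var[M_t]$ grows linearly with slope $r^3\lambda^*/(r - \mathbb{E}[B_t])^3$, which matches the leading term of (\ref{VarMt}).
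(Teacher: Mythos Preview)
Your approach is correct and is essentially the one the paper has in mind: the paper does not give a detailed proof but refers to Dassios--Zhao and Daw--Pender, and the Remark immediately following the proposition records exactly the cascade of moment ODEs you derive (your equations for $\mathbb E[\lambda_t]$, $\mathbb E[\lambda_t^2]$, $\mathbb E[M_t]$, $\mathbb E[M_t\lambda_t]$ and $\mathbb E[M_t^2]$ are the $n=1,2$ and $k=1,2$ instances of that system). Your observation that the formulas use $(\mathbb E[B_t])^2$ rather than $\mathbb E[B_t^2]$ is also consistent with the paper's convention in that Remark.
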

\bigskip

\begin{remark}
In general, for the moments of $M_t$ and for the moments of $\lambda_t$ of order $n$,
we have to recursively solve the following system of differential equations 
\begin{eqnarray}
\frac{d \mathbb E[M^n_t]}{dt} &=& \sum_{i=0}^{n-1}  {n \choose i} \mathbb E[\lambda_t M^i_t],
\nonumber\\
\frac{d \mathbb E[\lambda^n_t]}{dt} &=& n r \lambda^* \mathbb E[\lambda^{n-1}_t] - n r \mathbb E[\lambda^n_t] 
+  \sum_{i=0}^{n-1}  {n \choose i} (\mathbb E[B_t] )^{n-i} \mathbb E[\lambda^{i+1}_t],
\nonumber\\
\frac{d \mathbb E[\lambda^n_t M^k_t]}{dt} &=& n r \lambda^* \mathbb E[\lambda^{n-1}_t M^k_t] 
- n r \mathbb E[\lambda^n_t M^k_t] 
+  \sum_{(i,j) \in \cal S}  {n \choose i}  {k \choose j} (\mathbb E[B_t] )^{n-i} \mathbb E[\lambda^{i+1}_t M^j_t],
\nonumber
\end{eqnarray}
for positive integer values of  $n$ and $k$ and with 
${\cal S} := \{(i, j): i = 0, \ldots, n, j = 0, \ldots, k, (i,j) \neq (n,k)\}$. 
\end{remark}
\bigskip

\begin{corollary}
\label{cor1}
If $\mathbb E[B_t] = r$, 
the differential equations given in Daw and Pender 
\cite{DP} imply that
$$\mathbb E[\lambda_t] = \lambda_0 + r \lambda^* t, \qquad 
Var[\lambda_t] = r^2 \left(  \lambda_0 t + \frac{r \lambda^*}{2} t^2 \right),
\qquad 
\mathbb E[M_t]  = \lambda_0 t + \frac{r \lambda^*}{2} t^2,$$
$$Var[M_t]= \lambda_0 t + r \left( \lambda_0 + \frac{\lambda^*}{2} \right) t^2 
+ \frac{r^2}{3} \left( \lambda_0 + \lambda^*\right) t^3 + \frac{r^2 \lambda^*}{6} \left( r + 3 \lambda^*\right) t^4.$$
\end{corollary}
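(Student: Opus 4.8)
The obstacle to reading off these identities directly from the Proposition is that the factor $r-\mathbb E[B_t]$ sits in every denominator there, so all four formulas are literally undefined at $\mathbb E[B_t]=r$; the content of the Corollary is that the singularities are removable. The plan is therefore not to substitute into the Proposition but to go back to the moment equations recorded in the Remark above --- which are the equations of Daw and Pender \cite{DP} referred to in the statement --- impose $\mathbb E[B_t]=r$ there, and integrate the resulting linear system in the natural order, starting from the initial data $M_0=0$ and $\lambda_0$ deterministic, so that $\mathbb E[\lambda_0]=\lambda_0$, $Var[\lambda_0]=0$ and $\mathbb E[M_0]=\mathbb E[M^2_0]=\mathbb E[\lambda_0 M_0]=0$.

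Concretely, with $n=1$ the equation for $\mathbb E[\lambda^n_t]$ reads $\frac{d}{dt}\mathbb E[\lambda_t]=r\lambda^*-(r-\mathbb E[B_t])\,\mathbb E[\lambda_t]$, which at $\mathbb E[B_t]=r$ collapses to $\frac{d}{dt}\mathbb E[\lambda_t]=r\lambda^*$ and integrates to $\mathbb E[\lambda_t]=\lambda_0+r\lambda^* t$. With $n=2$, once the two terms $\mp 2r\,\mathbb E[\lambda^2_t]$ cancel at $\mathbb E[B_t]=r$, the same equation becomes $\frac{d}{dt}\mathbb E[\lambda^2_t]=(r^2+2r\lambda^*)\mathbb E[\lambda_t]$; subtracting $2\mathbb E[\lambda_t]\,\frac{d}{dt}\mathbb E[\lambda_t]$ turns this into $\frac{d}{dt}Var[\lambda_t]=r^2\,\mathbb E[\lambda_t]$, whence $Var[\lambda_t]=r^2\big(\lambda_0 t+\tfrac{r\lambda^*}{2}t^2\big)$. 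Integrating $\frac{d}{dt}\mathbb E[M_t]=\mathbb E[\lambda_t]$ gives $\mathbb E[M_t]=\lambda_0 t+\tfrac{r\lambda^*}{2}t^2$. Finally, taking $n=k=1$ (so ${\cal S}=\{(0,0),(0,1),(1,0)\}$) the mixed equation is $\frac{d}{dt}\mathbb E[\lambda_t M_t]=r\lambda^*\mathbb E[M_t]-(r-\mathbb E[B_t])\,\mathbb E[\lambda_t M_t]+\mathbb E[B_t]\,\mathbb E[\lambda_t]+\mathbb E[\lambda^2_t]$; at $\mathbb E[B_t]=r$ the $\mathbb E[\lambda_t M_t]$-term drops out and the right-hand side is an explicit polynomial in $t$ built from the quantities already found, so one integration yields $\mathbb E[\lambda_t M_t]$, a further use of $\frac{d}{dt}\mathbb E[M^2_t]=\mathbb E[\lambda_t]+2\mathbb E[\lambda_t M_t]$ yields $\mathbb E[M^2_t]$, and the announced quartic is $Var[M_t]=\mathbb E[M^2_t]-(\mathbb E[M_t])^2$.

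I expect the only laborious point to be this last chain: each integration raises the polynomial degree by one, so $\mathbb E[\lambda_t M_t]$ is cubic and $\mathbb E[M^2_t]$ quartic in $t$, and the coefficients must be tracked carefully through the two integrations and the final subtraction. A convenient independent check is to reintroduce $\varepsilon:=r-\mathbb E[B_t]$ and let $\varepsilon\downarrow 0$ in the Proposition's own formulas: expanding $e^{-\varepsilon t}$ and $e^{-2\varepsilon t}$ in powers of $\varepsilon$ --- to first order for $\mathbb E[\lambda_t]$, $\mathbb E[M_t]$, $Var[\lambda_t]$, and to third order for $Var[M_t]$ because of the $(r-\mathbb E[B_t])^{-3}$ prefactor --- the negative powers of $\varepsilon$ must cancel and the surviving $\varepsilon^0$ part reproduces the formulas above.
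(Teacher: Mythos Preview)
Your proposal is correct and follows exactly the route the paper indicates: the Corollary is stated as a direct consequence of the Daw--Pender moment ODEs recorded in the preceding Remark, and the paper merely adds afterwards that the same formulas are recovered by letting $\mathbb E[B_t]\to r$ in the Proposition --- precisely your independent check. The paper gives no further detail, so your explicit integration of the ODE hierarchy (first $\mathbb E[\lambda_t]$, then $Var[\lambda_t]$ and $\mathbb E[M_t]$, then the mixed moment and $Var[M_t]$) is a faithful fleshing-out of what the authors leave implicit.
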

\bigskip

Note that by taking the limit as $\mathbb E[B_t] \rightarrow r$ in 
Equations (\ref{Elambdat}), (\ref{Varlambdat}),  (\ref{EMt}) and  (\ref{VarMt}), the same results of Corollary \ref{cor1} are achieved.
\bigskip

\begin{proposition}
Assuming that $\mathbb E[B_t] <  r$, and taking
$\displaystyle \lim_{t \rightarrow \infty} \mathbb E[B_t] = \hat B_1$ and 
$\displaystyle \lim_{t \rightarrow \infty} (\mathbb E[B_t] )^2 = \hat B_2$, 
we have 
$$\lim_{t \rightarrow \infty} \mathbb E [\lambda_t] = \frac{r \lambda^*}{r - \hat B_1}  = \lambda_\infty,
\qquad 
\lim_{t \rightarrow \infty}
Var[\lambda_t] 
= \frac{r \lambda^* \hat B_2}
{2 \left(r - \hat B_1 \right)^2},
$$
and  
$\mathbb E[M_t] \rightarrow \infty$ as $t \rightarrow \infty$.
\end{proposition}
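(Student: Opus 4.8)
The plan is to obtain all three limits by letting $t \to \infty$ in the closed-form expressions (\ref{Elambdat}), (\ref{Varlambdat}) and (\ref{EMt}) of the preceding Proposition; no new probabilistic argument is needed. Since the asserted limits carry $r - \hat B_1$ in their denominators we are implicitly in the regime $\hat B_1 < r$, and I would record the consequence of this first: from $\mathbb E[B_t] \to \hat B_1 < r$ there exist $\varepsilon > 0$ and $T > 0$ with $r - \mathbb E[B_t] \geq \varepsilon$ for all $t \geq T$, hence $t\,(r - \mathbb E[B_t]) \to \infty$ and therefore
\begin{equation*}
e^{-t(r - \mathbb E[B_t])} \longrightarrow 0 \qquad\text{and}\qquad e^{-2t(r - \mathbb E[B_t])} \longrightarrow 0, \qquad t \to \infty .
\end{equation*}

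Next I would pass to the limit in (\ref{Elambdat}): the first summand is a continuous function of $\mathbb E[B_t]$ and tends to $r\lambda^*/(r-\hat B_1) = \lambda_\infty$, while the second summand is the product of the factor $\lambda_0 - r\lambda^*/(r - \mathbb E[B_t])$, which converges to the finite number $\lambda_0 - \lambda_\infty$, with the vanishing term $e^{-t(r - \mathbb E[B_t])}$; summing gives $\mathbb E[\lambda_t] \to \lambda_\infty$. For the variance I would treat (\ref{Varlambdat}) in the same way: inside the square bracket the two exponential contributions have coefficients converging to finite constants and are multiplied by $e^{-2t(r - \mathbb E[B_t])}$, respectively $e^{-t(r - \mathbb E[B_t])}$, so both vanish, and the only surviving term is $\frac{(\mathbb E[B_t])^2}{r - \mathbb E[B_t]}\cdot\frac{r\lambda^*}{2\,(r - \mathbb E[B_t])}$; invoking $(\mathbb E[B_t])^2 \to \hat B_2$, $\mathbb E[B_t] \to \hat B_1$ and the continuity of this rational expression yields the stated value $r\lambda^*\hat B_2 / \bigl(2(r - \hat B_1)^2\bigr)$.

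For $\mathbb E[M_t]$ I would rewrite (\ref{EMt}) as
\begin{equation*}
\mathbb E[M_t] = \frac{r\lambda^*}{r - \mathbb E[B_t]}\, t \;+\; \frac{1}{r - \mathbb E[B_t]}\left(\lambda_0 - \frac{r\lambda^*}{r - \mathbb E[B_t]}\right)\bigl(1 - e^{-t(r - \mathbb E[B_t])}\bigr),
\end{equation*}
whose second summand converges to the finite value $(\lambda_0 - \lambda_\infty)/(r - \hat B_1)$, while in the first summand the coefficient of $t$ obeys $r\lambda^*/(r - \mathbb E[B_t]) \to \lambda_\infty > 0$, so that for $t$ large this summand exceeds $\tfrac12\lambda_\infty t$; hence $\mathbb E[M_t] \to \infty$.

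The only point requiring care — and it is a mild one — is the claim that the exponentially decaying terms vanish, i.e.\ that $t\,(r - \mathbb E[B_t]) \to \infty$; this is exactly where one uses $\hat B_1 < r$, which is implicit in the form of the limits. If instead $\mathbb E[B_t] \uparrow r$ the analysis breaks down and, as the structure of (\ref{Varlambdat}) suggests, the limiting variance need not be finite; I would add a short remark to that effect. Beyond this, the argument is just continuity of elementary rational functions together with the hypothesised convergence of $\mathbb E[B_t]$ and $(\mathbb E[B_t])^2$.
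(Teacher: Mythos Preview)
Your argument is correct: passing to the limit in the closed-form expressions (\ref{Elambdat}), (\ref{Varlambdat}) and (\ref{EMt}) is exactly the intended route, and you handle the only nontrivial point --- that $t(r-\mathbb E[B_t])\to\infty$ requires $\hat B_1<r$ --- cleanly. The paper itself does not spell out a proof for this Proposition, treating the result as an immediate consequence of the preceding explicit formulas, so your write-up is in the same spirit but more explicit than what the paper provides.
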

\bigskip


\subsection{Markov Property of the $Hawkes/sdHawkes/\infty$  System} 

Recall that, if the excitation function of a Hawkes process is exponential, then the process 
jointly 
with its intensity is a Markov process. 
In order to characterize ${\cal L}\left(N_t, \lambda_t, \mu_t \right)$, the law of $N_t$, the Hawkes arrival intensity $\lambda_t$ and the state dependent Hawkes server intensity $\mu_t$, we have to prove  the Markov property of the $Hawkes/sdHawkes/\infty$  system. 
To this end, we need some preliminaries.
Let the processes $M$, $S$ and $N$ be  defined on the same filtered probability space 
$(\Omega, \mathcal{F},\{{\mathcal F}_t\}_{t \geq 0},P)$. 
Given the sub $\sigma$-algebras of $\mathcal{F}_t$
$$\mathcal{F}^M_t := \sigma\{M_u, u \leq t\}, \qquad \mathcal{F}^S_t := \sigma\{S_u, u \leq t\}, \qquad \mbox{and} \qquad 
\mathcal{F}^{M, S}_t := \mathcal{F}^M_t \vee \mathcal{F}^S_t,$$
we observe that ${\cal F}^{M, S}_t$ contains, also,  all the informations related to the process 
$N = \{N_t, t \geq 0\}$ until time $t$.

Note that, 
$\mathcal{F}^M_t \subseteq 
\mathcal{F}^{M,S}_t$ and 
that 
$\mathcal{F}^S_t \subseteq 
\mathcal{F}^{M,S}_t$. 
Consequently, every 
$\mathcal{F}^{M,S}_t$-martingale is a $\mathcal{F}^M_t$-martingale and, at the same time, every 
$\mathcal{F}^{M,S}_t$-martingale is a $\mathcal{F}^S_t$-martingale. 
But, in general, 
it is not true that a  $\mathcal{F}^M_t$-martingale is a 
$\mathcal{F}^{M,S}_t$-martingale 
and that a 
$\mathcal{F}^S_t$-martingale is a 
$\mathcal{F}^{M,S}_t$-martingale. 
This is a classical topic in  this context,  the so-called problem of the enlargement of filtrations. 
An exciting example is Azema's martingale, see Subsection 4.3.8 in Jeanblanc et al.  
\cite{JYC2009}. 
Hence, to overcome this difficulty,
we assume 
a property for the class of martingales, that is the Immersion property as given below.
\bigskip

 \begin{definition}
 When  the filtration 
 $\mathcal{F}^M := \{{\mathcal F}^M_t\}_{t \geq 0}$ 
 is immersed in the filtration
 $\mathcal{F}^{M, S} := \{{\mathcal F}^{M, S}_t\}_{t \geq 0}$, 
 it means that any
$\mathcal{F}^M_t$-martingale is a 
$\mathcal{F}^{M,S}_t$-martingale. 
 In this case, we say that $\mathcal{F}^M$ 
satisfies the \underline{Immersion Property} with respect to the filtration $\mathcal{F}^{M, S}$.
Also, let  the filtration 
 $\mathcal{F}^S := \{{\mathcal F}^S_t\}_{t \geq 0}$ 
satisfy the Immersion Property  with respect to  
$\mathcal{F}^{M, S}$. 
\end{definition}
\bigskip

\begin{remark}
Given the arrival process $M$ and the service process $S$, we have that 
the sub $\sigma$-algebras  $\mathcal{F}^M_t$ and $\mathcal{F}^S_t$ generated by these processes, respectively, are such that $\mathcal{F}^M_t \subseteq 
\mathcal{F}^{M,S}_t$ and 
$\mathcal{F}^S_t \subseteq 
\mathcal{F}^{M, S}_t$.

By Equation (\ref{mu}), we are able to deduce that the service process $S$ can be represented in terms of a function of $M$ driven by another Hawkes process  $R$, which is independent of $M$. 
Since the processes $M$ and $R$ are independent, any $\mathcal{F}^M_t$-martingale is a   $\mathcal{F}^M_t \vee \mathcal{F}^R_t$-martingale  and any 
$\mathcal{F}^S_t$-martingale is a  $\mathcal{F}^M_t \vee \mathcal{F}^R_t$-martingale. 
Taking into account that $\mathcal{F}^{M, S}_t 
\subseteq 
\mathcal{F}^M_t \vee \mathcal{F}^R_t$,
we get that
$$\mathcal{F}^M_t \subseteq 
\mathcal{F}^{M,S}_t 
\subseteq 
\mathcal{F}^M_t \vee \mathcal{F}^R_t
\qquad \mbox{and} \qquad 
\mathcal{F}^S_t \subseteq 
\mathcal{F}^{M, S}_t
\subseteq 
\mathcal{F}^M_t \vee \mathcal{F}^R_t,$$
which implies that any $\mathcal{F}^M_t \vee \mathcal{F}^R_t$-martingale is a 
$\mathcal{F}^{M, S}_t$-martingales.
As a conclusion, 
all the $\mathcal{F}^M_t$-martingales  and all the $\mathcal{F}^S_t$-martingales are $\mathcal{F}^{M, S}_t$-martingales. 
Greater details on this topic can be found in Aksamit and Jeanblanc \cite{AJ}, Jeanblanc et al. \cite{JYC2009} 
and, more recently, in Calzolari and Torti \cite{CT}. 
\end{remark}
\bigskip

\begin{lemma}
\label{lambdamut+dt}
By Equation (\ref{lambda}), for any 
positive value of 
$\Delta t$,
we get that
\begin{eqnarray}
\lambda_{t+ \Delta t} - \lambda_t  
&=& (e^{-r \Delta t} - 1) e^{- r t} (\lambda_0 - \lambda^*) 
+ (e^{-r \Delta t} - 1) \int_0^{t} B_s  e^{-r (t - s)} dM_s 
+ \int_t^{t+\Delta t} B_s  e^{-r (t +\Delta t - s)} dM_s
\nonumber\\
&=& (e^{-r \Delta t} - 1) (\lambda_t - \lambda^*) + \int_t^{t+ \Delta t} B_s  e^{-r (t + \Delta t - s)} dM_s.
\nonumber
\end{eqnarray}
By Equation (\ref{mu}), if  $N_t$ remains constant between $t$ and $t + \Delta t$, we have that
\begin{eqnarray}
\mu_{t+ \Delta t} - \mu_t 
&=& (e^{- s \Delta t}-1) N_t \left(  
e^{- s t} (\mu_0 - \mu^*) +  \int_0^t C_u  e^{-s (t - u)} dS_u \right) + N_t\int_t^{t+\Delta t} C_u  e^{-s (t + \Delta t - u)} dS_u 
\nonumber\\
&=& (e^{- s \Delta t}-1) ( \mu_t - \mu^* N_t ) + N_t \int_t^{t+\Delta t} C_u  e^{-s (t + \Delta t - u)} dS_u .
\nonumber
\end{eqnarray}
Moreover, 
for a negligible value of 
$\Delta t$ and 
between $t$ and $t+\Delta t$, if there are no new arrivals  and 
there are no service completions, 
we get that 
$$\lambda_{t+ \Delta t} - \lambda_t \approx r \Delta t (\lambda^* - \lambda_t),
\qquad \qquad 
\mu_{t+ \Delta t} - \mu_t \approx s \Delta t (\mu^* N_t - \mu_t).$$
\end{lemma}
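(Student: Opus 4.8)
The plan is to handle the three assertions in turn, each one reducing to elementary manipulation of the closed-form representations (\ref{lambda}) and (\ref{mu}); the only real ingredient is the memoryless (semigroup) structure of the exponential kernel.

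First, for the arrival intensity I would evaluate (\ref{lambda}) at time $t+\Delta t$, split the stochastic integral as $\int_0^{t+\Delta t} = \int_0^{t} + \int_t^{t+\Delta t}$, and pull the common factor $e^{-r\Delta t}$ out of the first piece and out of the deterministic term $e^{-r(t+\Delta t)}(\lambda_0-\lambda^*)$. This produces $\lambda_{t+\Delta t} = \lambda^* + e^{-r\Delta t}\big(e^{-rt}(\lambda_0-\lambda^*) + \int_0^t B_s e^{-r(t-s)}\,dM_s\big) + \int_t^{t+\Delta t} B_s e^{-r(t+\Delta t - s)}\,dM_s$. Recognizing the bracket as $\lambda_t - \lambda^*$ by (\ref{lambda}) and then subtracting $\lambda_t$ gives both displayed lines for $\lambda_{t+\Delta t}-\lambda_t$: the first line keeps the bracket expanded, the second collapses it.

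Second, for the server intensity I would run the identical computation on (\ref{mu}). The one extra point of care is the bookkeeping of the factor $N_t$: under the stated hypothesis that $N$ is constant on $[t,t+\Delta t]$ one has $N_{t+\Delta t}=N_t$, so the same splitting yields $\mu_{t+\Delta t} = \mu^* N_t + e^{-s\Delta t}(\mu_t - \mu^* N_t) + N_t\int_t^{t+\Delta t}C_u e^{-s(t+\Delta t - u)}\,dS_u$, where I used $\mu_t - \mu^* N_t = N_t\big(e^{-st}(\mu_0-\mu^*)+\int_0^t C_u e^{-s(t-u)}\,dS_u\big)$ from (\ref{mu}); subtracting $\mu_t$ gives the stated identity.

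Third, for the small-$\Delta t$ approximations, note that if there is neither an arrival nor a service completion in $(t,t+\Delta t]$ then $dM_s\equiv 0$ and $dS_u\equiv 0$ there (and in particular $N$ is constant, so the previous display applies), so both stochastic integrals vanish. What remains is $\lambda_{t+\Delta t}-\lambda_t = (e^{-r\Delta t}-1)(\lambda_t - \lambda^*)$ and $\mu_{t+\Delta t}-\mu_t = (e^{-s\Delta t}-1)(\mu_t-\mu^* N_t)$; a first-order Taylor expansion $e^{-x}-1 = -x + o(x)$ then gives $r\Delta t(\lambda^*-\lambda_t)$ and $s\Delta t(\mu^* N_t - \mu_t)$ respectively. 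I do not anticipate any genuine obstacle here — the argument is purely the reassembly of the "$\int_0^t$" contribution into $\lambda_t$ (resp. $\mu_t/N_t$) afforded by the exponential kernel — so the main thing to keep straight is which term carries the $N_t$ factor in the service case and that the interval $(t,t+\Delta t]$ is genuinely event-free in the last step.
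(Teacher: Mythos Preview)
Your proposal is correct and follows exactly the approach the paper takes: the lemma as stated already embeds its own derivation (the first displayed line is the expanded form, the second the collapsed one), and your write-up simply makes explicit the split $\int_0^{t+\Delta t}=\int_0^t+\int_t^{t+\Delta t}$, the factoring of $e^{-r\Delta t}$ (resp.\ $e^{-s\Delta t}$), and the first-order Taylor expansion that the paper leaves to the reader. There is nothing to add or correct.
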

\bigskip

Recalling that the self-exciting terms of the intensities $\lambda_t$ and $\mu_t$ 
are defined by exponential decay kernels, 
even though the processes $M$ and $S$ does not have the Markov property themselves, 
we are able to show that  $(M, S, \lambda, \mu)$ is a Markov process 
(Errais et al. 
\cite{EGG2010}). 
To get the Markov property of $(M, S, \lambda, \mu)$, 
we derive its Dynkin formula, 
in the next Proposition, 
taking into account that this formula is a direct consequence of the strong Markov property and, hence, it
builds a bridge
between differential equations and Markov processes, 
(see for instance, \O ksendal 
\cite{O2003},  Section 7.4).
\bigskip

\begin{proposition}
\label{generators-Markovianity}
Let $\mathcal{A}$ be an operator acting on a suitable function $f$, with continuous partial derivatives with respect to $\lambda$ and $\mu$, such that
\begin{equation}
\label{generator}
\mathcal{A}f(M, S, \lambda, \mu) 
:= r (\lambda^* \hspace{-.02 in} - \lambda) \frac{\partial f(M, S, \lambda, \mu)}{\partial \lambda} 
+ s (\mu^* - \mu) \frac{\partial f(M, S, \lambda, \mu)}{\partial \mu}
+ \lambda \mathcal{A}^\lambda f(M, S, \lambda, \mu) + \mu  \mathcal{A}^\mu f(M, S, \lambda, \mu),
\end{equation}
where
$$\mathcal{A}^\lambda f(M, S, \lambda, \mu)  = \int_0^\infty \left[f\left(M+1, S, \lambda + x, \mu \right) - f(M, S, \lambda, \mu) \right] dP[B \leq  x],$$
$$\mathcal{A}^\mu f(M, S, \lambda, \mu)  
=  \int_0^\infty \left[f\left(M, S+ 1, \lambda, \mu + y \right) - f(M, S, \lambda, \mu) \right]  dP[C \leq y],$$
and  $B$ and $C$ are random variables such that 
$P(B > 0) = 1$ and $P(C > 0) = 1$.  If, for $t \leq T$,
$$\mathbb E\left[ \int_0^t \left| \mathcal{A} f(M_u, S_u, \lambda_u, \mu_u)
- r (\lambda^* - \lambda_u) \frac{\partial f(M_u, S_u, \lambda_u, \mu_u)}{\partial \lambda} 
- s (\mu^* - \mu_u) \frac{\partial f(M_u, S_u, \lambda_u, \mu_u)}{\partial \mu} 
\right| \ du \right] < \infty,
$$
the following Dynkin formula holds
\begin{equation}\label{Dynkin}
\mathbb E \left[ f (M_T, S_T, \lambda_T, \mu_T) \Big| \mathcal{F}^{M, S}_t \right]
= f (M_t, S_t, \lambda_t, \mu_t)  
+ 
\mathbb E \left[ \int_t^T \mathcal{A} f (M_u, S_u, \lambda_u, \mu_u) \ du \Big| \mathcal{F}^{M, S}_t  \right].
\end{equation}
\end{proposition}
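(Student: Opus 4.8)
The plan is to treat $(M_t,S_t,\lambda_t,\mu_t)$ as a piecewise--deterministic Markov process and to obtain \eqref{Dynkin} from the change--of--variable formula for finite--variation jump processes, followed by compensation of the jump terms. Between two consecutive jump epochs of $M$ and $S$ the counters $M_t,S_t$ stay constant while the pair $(\lambda_t,\mu_t)$ flows deterministically with the infinitesimal dynamics recorded in Lemma~\ref{lambdamut+dt} (with $N_t$ frozen); at an arrival epoch $M$ jumps by $1$ and $\lambda$ by an independent mark $B$ with law $P[B\le x]$, and at a service completion $S$ jumps by $1$ and $\mu$ by an independent mark $C$ with law $P[C\le y]$. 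First I would apply the change--of--variable formula to $u\mapsto f(M_u,S_u,\lambda_u,\mu_u)$ on $[t,T]$: since $f$ is $C^1$ in $(\lambda,\mu)$, these coordinates are absolutely continuous between jumps, and $M,S$ are pure--jump, this is legitimate and produces an absolutely continuous contribution equal to the integral over $[t,T]$ of the two drift terms of $\mathcal A$, together with a sum over the jump epochs of $M$ of increments $f(M_{u-}+1,S_{u-},\lambda_{u-}+B,\mu_{u-})-f(M_{u-},S_{u-},\lambda_{u-},\mu_{u-})$ and an analogous sum over the jump epochs of $S$ involving $C$.

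Next I would compensate these two jump sums. Let $p^M(du,dx)$ and $p^S(du,dy)$ be the integer--valued random measures carrying the jumps of $(M,B)$ and of $(S,C)$. By the Hawkes structure of $M$ and the independence of its marks, the $\mathcal F^M$--compensator of $p^M$ is $\lambda_u\,du\,dP[B\le x]$, and by the Immersion Property of $\mathcal F^M$ in $\mathcal F^{M,S}$ it stays the compensator relative to $\mathcal F^{M,S}$; likewise, using the representation of $S$ as a functional of $M$ driven by the independent Hawkes process $R$ described in the Remark preceding this Proposition, the $\mathcal F^{M,S}$--compensator of $p^S$ is $\mu_u\,du\,dP[C\le y]$. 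Substituting and collecting terms, the change--of--variable identity becomes
\[
f(M_T,S_T,\lambda_T,\mu_T)=f(M_t,S_t,\lambda_t,\mu_t)+\int_t^T \mathcal A f(M_u,S_u,\lambda_u,\mu_u)\,du + \mathcal M_T-\mathcal M_t ,
\]
where $\mathcal M$ is the sum of the stochastic integrals of the jump increments against the compensated measures $p^M-\widehat{p^M}$ and $p^S-\widehat{p^S}$, hence a local $\mathcal F^{M,S}$--martingale vanishing at $t$.

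Finally I would invoke the hypothesis: the stated integrability bound is exactly $\mathbb E\big[\int_0^T|\lambda_u\mathcal A^\lambda f+\mu_u\mathcal A^\mu f|\,du\big]<\infty$, which makes the compensators of the two jump sums integrable and thereby upgrades $\mathcal M$ to a true $\mathcal F^{M,S}$--martingale, so that $\mathbb E[\mathcal M_T\mid\mathcal F^{M,S}_t]=\mathcal M_t$. Taking $\mathbb E[\,\cdot\mid\mathcal F^{M,S}_t]$ in the displayed identity and using that $f(M_t,S_t,\lambda_t,\mu_t)$ is $\mathcal F^{M,S}_t$--measurable yields \eqref{Dynkin}; the Markov property of $(M,S,\lambda,\mu)$ claimed above then follows because the right--hand side depends on the past only through $(M_t,S_t,\lambda_t,\mu_t)$.

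The step I expect to be the main obstacle is the compensation of $p^S$: after enlarging $\mathcal F^S$ to $\mathcal F^{M,S}$ one must make sure that $S$ still admits $\mu_t$ as its $\mathcal F^{M,S}$--intensity, even though $\mu_t$ depends through $N_t=M_t-S_t$ on the arrival stream $M$. This is precisely what the Immersion Property together with the independent--driver representation of $S$ is designed to provide, and the care lies in checking that the immersion criterion (as in Bielecki et al.~\cite{BJR2006}, Jeanblanc et al.~\cite{JYC2009}) genuinely applies to the present construction rather than being merely postulated. A minor technical point is to justify the change--of--variable formula with $f$ only differentiable in $(\lambda,\mu)$ and merely measurable in the discrete variables $(M,S)$, which is harmless because those coordinates move only by jumps.
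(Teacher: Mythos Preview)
Your proposal is correct and follows essentially the same route as the paper's proof: both apply the change--of--variable formula for finite--variation processes to obtain the drift integrals plus a sum over jumps, compensate the jump sums using that $M_t-\int_0^t\lambda_u\,du$ and $S_t-\int_0^t\mu_u\,du$ are $\mathcal F^{M,S}$--martingales (the first via the Immersion Property), and then invoke the stated integrability condition to make the resulting stochastic integrals true martingales (the paper cites Br\'emaud~\cite{Bremaud1981}, Chapter~II, Theorem~8, for this last step). Your marked--point--process/random--measure phrasing is slightly more explicit than the paper's, but the argument is the same.
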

\bigskip

\proof 
For the sake of completeness, we prove this result along similar line as in Errais et al.  
\cite{EGG2010}. 
For a fixed time $t$, $(M_t, S_t, \lambda_t, \mu_t)$ has right-continuous paths of finite variation.
Hence,
\begin{eqnarray}\label{cont+jumps}
f (M_t, S_t, \lambda_t, \mu_t) - f (M_0, S_0, \lambda_0, \mu_0) 
&=&
\int_0^t  r (\lambda^* - \lambda_u) \frac{\partial f(M_u, S_u, \lambda_u, \mu_u)}{\partial \lambda}  \ du 
\nonumber\\
&&
+ \int_0^t  s (\mu^* - \mu_u) \frac{\partial f(M_u, S_u, \lambda_u, \mu_u)}{\partial \mu}\ du
\\
&&
+ \sum_{0 < u \leq t} \left[ f(M_u, S_u, \lambda_u, \mu_u) - f(M_{u-}, S_{u-}, \lambda_{u-}, \mu_{u-}) \right].
\nonumber
\end{eqnarray}
Note that 
$m^\lambda_t = M_t - \int_0^t \lambda_v dv$ 
is a $\mathcal{F}^M_t$-martingale, and by the Immersion Property, $m^\lambda_t$ is also a $\mathcal{F}^{M, S}_t$-martingale, and, moreover, 
$m^\mu_t = S_t - \int_0^t \mu_v dv$ is a $\mathcal{F}^{M, S}_t$-martingale.
Hence, we are able to rewrite the summation in Equation (\ref{cont+jumps}) as
$$\int_0^t \mathcal{A}^\lambda f(M_v, S_v, \lambda_v, \mu_v)  \left( dm^\lambda_v + \lambda_v  dv \right)
+ \int_0^t \mathcal{A}^\mu f(M_v, S_v, \lambda_v, \mu_v)  \left( dm^\mu_v + \mu_v  dv \right).$$
The integrability condition on the predictable integrand guarantees that 
$$\int_0^t \mathcal{A}^\lambda f(M_v, S_v, \lambda_v, \mu_v)  \ dm^\lambda_v 
+ \int_0^t \mathcal{A}^\mu f(M_v, S_v, \lambda_v, \mu_v)  \ dm^\mu_v$$
is a martingale (Theorem 8, Chapter II of Bremaud 
\cite{Bremaud1981}).
As a conclusion, we get that
$f(M, S, \lambda, \mu)$ is a semi-martingale
having a unique decomposition given by a sum of a predictable process with finite variation and a martingale, and we are able to write that 
$$f (M_t, S_t, \lambda_t, \mu_t)) - f (M_0, S_0, \lambda_0, \mu_0)) - \int_0^t \mathcal{A} f (M_v, S_v, \lambda_v, \mu_v) dv$$
$$= \int_0^t \mathcal{A}^\lambda f(M_v, S_v, \lambda_v, \mu_v)  dm^\lambda_v + \int_0^t \mathcal{A}^\mu f(M_v, S_v, \lambda_v, \mu_v)  dm^\mu_v.$$
Since the right hand side is a martingale, so is the process defined by the left hand side, which results to the formula given by Equation (\ref{Dynkin}). \qed
\bigskip

Under all the assumptions made so far, the infinitesimal generator of the process $(M, S, \lambda, \mu)$  acting on a function $f(M, S, \lambda, \mu)$ 
is given by Equation  (\ref{generator}).
\bigskip

\begin{remark}\label{Nlambdamu markov}
Since $(M, S, \lambda, \mu)$ is a Markov process, 
and taking into account that the process 
$N = M - S$, 
we are able to deduce that $(N, \lambda, \mu)$ is also a Markov process.
\end{remark}
\bigskip

\section{Characterisation of the Law of the Infinite-server Systems}

The joint transient distribution of $\left(N, \lambda, \mu \right)$ is uniquely defined by the transformation
\begin{equation}
\label{Def-zeta}
\zeta(t, z, u, v) = \mathbb E \left[ z^{N_t} e^{- u \lambda_t} e^{- v \mu_t} \right],
\end{equation}
where $t \geq 0$, $0 \leq z \leq 1$, $u \geq 0$ and $v \geq 0$.
In this section, we characterize $\zeta$ in terms of the solution of a system of ordinary differential equations (ODEs).
\bigskip

\begin{theorem}\label{mainTheorem}
Let the arrival process be a Hawkes process and let the service process be a sdHawkes process. 
Given the random variables $B$ and $C$ as defined in Proposition \ref{generators-Markovianity}, 
let $\beta(u) := \mathbb E [e^{- u B}]$ and let $\gamma(v) := \mathbb E [e^{- v C}]$.\\
If $(N_t, \lambda_t, \mu_t)|_{t=0} = (0, \lambda_0, 0)$, 
then the couple $\left(u(\cdot), v(\cdot) \right)$  solves the system of ODEs
\begin{equation}  
\label{systemuv}
\left\{ 
\begin{array}l 
\displaystyle u'(t) + r u(t) -1 +  z \beta \left(u(t)  \right) e^{   - s \mu^* \int_0^t v(x) \ dx } = 0,
\\
\displaystyle v'(t) + s v(t) - 1 + \frac{\gamma \left(v(t)\right)}{z} e^{ s \mu^* \int_0^t v(x) \ dx } = 0, 
\end{array}
\right.
\end{equation}
with boundary conditions $u(0) = u$ and $v(0) = v$. 
Furthermore, 
\begin{equation}  
\label{EquationZeta}
\zeta(t, z, u, v) = e^{- u(0) \lambda_0 } \exp{\left\{ - \lambda^* r \int_0^t u(x) \ dx \right\}},
\end{equation}
where $\zeta$ depends on $v$ through the coupling with $u$ given by system of ODEs (\ref{systemuv}).
\end{theorem}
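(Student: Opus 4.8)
\noindent\emph{Proof strategy.} The plan is to use the Markov property of $(N,\lambda,\mu)$ from Remark \ref{Nlambdamu markov} to turn $\zeta$ into the solution of a first--order linear PDE in the variables $(t,z,u,v)$, and then to solve that PDE by characteristics, the characteristic curves being governed precisely by the system (\ref{systemuv}). Let $\mathcal L$ denote the generator of $(N,\lambda,\mu)$: it is obtained from Proposition \ref{generators-Markovianity} via $N=M-S$, so that an arrival occurs at rate $\lambda$ with $N\mapsto N+1$, $\lambda\mapsto\lambda+B$, a departure occurs at rate $\mu$ with $N\mapsto N-1$, $\mu\mapsto\mu+C$, and the drifts are $r(\lambda^{*}-\lambda)$ in $\lambda$ and, as recorded in Lemma \ref{lambdamut+dt}, $s(\mu^{*}N-\mu)$ in $\mu$. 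The first step is to apply $\mathcal L$ to $f(N,\lambda,\mu)=z^{N}e^{-u\lambda-v\mu}$. Because $f(N+1,\lambda+B,\mu)=z\,e^{-uB}f$ and $f(N-1,\lambda,\mu+C)=z^{-1}e^{-vC}f$, the jump contributions become $\lambda\big(z\beta(u)-1\big)f$ and $\mu\big(z^{-1}\gamma(v)-1\big)f$ with $\beta(u)=\mathbb E[e^{-uB}]$ and $\gamma(v)=\mathbb E[e^{-vC}]$; together with the two drift terms this gives a closed expression for $\mathcal L f$ equal to $f$ times an expression that is affine in $\lambda$, affine in $\mu$, and linear in $N$.

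By the Dynkin formula (Proposition \ref{generators-Markovianity}), $\tfrac{d}{dt}\zeta(t,z,u,v)=\mathbb E[\mathcal L f(N_{t},\lambda_{t},\mu_{t})]$, and here the computation closes thanks to the three identities $\lambda f=-\partial_{u}f$, $\mu f=-\partial_{v}f$ and $Nf=z\,\partial_{z}f$, which convert the $\lambda$--, $\mu$-- and $N$--prefactors into $u$--, $v$-- and $z$--derivatives of $\zeta$. This yields
\[
\frac{\partial\zeta}{\partial t}+\big(ru+z\beta(u)-1\big)\frac{\partial\zeta}{\partial u}+\big(sv+z^{-1}\gamma(v)-1\big)\frac{\partial\zeta}{\partial v}+s\mu^{*}vz\,\frac{\partial\zeta}{\partial z}=-r\lambda^{*}u\,\zeta ,
\]
with initial datum $\zeta(0,z,u,v)=e^{-u\lambda_{0}}$ forced by $(N_{0},\lambda_{0},\mu_{0})=(0,\lambda_{0},0)$. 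Solving by characteristics: along a characteristic one has $u'=1-ru-z\beta(u)$, $v'=1-sv-z^{-1}\gamma(v)$, $z'=-s\mu^{*}vz$ and $\zeta'=r\lambda^{*}u\,\zeta$; integrating the $z$--equation gives $z(\cdot)=z\exp\{-s\mu^{*}\!\int v\}$, and feeding this into the $u$-- and $v$--equations reproduces exactly (\ref{systemuv}) with $u(0)=u$, $v(0)=v$, while integrating the $\zeta$--equation along the characteristic back to the initial time --- where $N=0$ and $\mu=0$, so that only the $e^{-u(\cdot)\lambda_{0}}$ factor survives --- gives (\ref{EquationZeta}).

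The step I expect to be the real obstacle is the well--posedness of the characteristic system on $[0,t]$: although the vector field is smooth in $(u,v,z)$, one must check that the solution does not leave the region where $\beta$ and $\gamma$ are finite (a priori nothing forces $v(\cdot)$ to stay in $[0,\infty)$, and $\gamma(v)=\mathbb E[e^{-vC}]$ need not be finite for $v<0$ unless $C$ has enough exponential moments) and that $z(\cdot)$ stays bounded away from zero so that $z^{-1}$ is meaningful; I would handle this by a local existence argument together with a priori bounds exploiting $z\in[0,1]$, $u,v\ge 0$ and the sign of the vector field near the boundary, restricting the horizon if needed. Besides this, I would verify the integrability hypothesis of Proposition \ref{generators-Markovianity} --- which legitimises differentiating under the expectation --- using the stability assumption $\mathbb E[B_{t}]<r$ and the bound on $\mathbb E[\lambda_{t}]$ in Equation (\ref{Elambdat}), and I would note that the ``restart'' of the sdHawkes server at $N=1$ affects only the empty state and hence only the initialisation $\mu_{0}=0$, leaving the generator computation intact. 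A clean alternative that sidesteps the PDE is to define $(u,v)$ by (\ref{systemuv}) and $\zeta$ by (\ref{EquationZeta}) and then verify directly, by differentiating (\ref{EquationZeta}) and using the ODEs, that this $\zeta$ satisfies $\tfrac{d}{dt}\zeta=\mathbb E[\mathcal L f]$ with the correct initial value, whence uniqueness of the transform finishes the proof.
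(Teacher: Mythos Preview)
Your proposal is correct and arrives at exactly the same PDE for $\zeta$ as the paper (your displayed equation is identical to the paper's Equation~(\ref{Eq-zeta})), and the characteristics step is likewise the same. The genuine difference is how the PDE is obtained. The paper works \emph{forward}: it first derives an evolution equation for the joint distribution $F(t,k,\lambda,\mu)$ (Proposition~\ref{firstpart}), then Laplace--transforms in $(\lambda,\mu)$ to get an equation for $\xi(t,k,u,v)$ (Proposition~\ref{secondpart}), and finally sums against $z^{k}$ to reach the PDE for $\zeta$. You instead work \emph{backward}, applying the generator directly to the test function $f(N,\lambda,\mu)=z^{N}e^{-u\lambda-v\mu}$ and closing via $\lambda f=-\partial_{u}f$, $\mu f=-\partial_{v}f$, $Nf=z\partial_{z}f$; this collapses the paper's two auxiliary propositions into a single one--line generator computation. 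Your route is shorter and more transparent, while the paper's route produces the intermediate objects $F$ and $\xi$ (which may have independent value) and avoids invoking Dynkin/differentiation--under--expectation in favour of an explicit Kolmogorov forward balance. The technical caveats you flag (domain of the characteristic flow, integrability for Dynkin) are not addressed in the paper's proof either, so your argument is at least as complete on those points.
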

\bigskip

In order to prove Theorem \ref{mainTheorem}, we make use of  Proposition \ref{firstpart} and Proposition \ref{secondpart} 
given below, whose proofs are in Appendix.
\bigskip

\begin{proposition}\label{firstpart}
The joint distribution of $\left(N, \lambda, \mu \right)$, for $t > 0$, 
$$F(t, k, \lambda, \mu) = P\left[N_t = k, \lambda_t \leq \lambda, \mu_t \leq \mu \right]$$
is such that
\begin{eqnarray}
\label{LHSFderivated=RHSFderivated}
&&\frac{\partial^3 F(t, k, \lambda, \mu)}{\partial \mu \partial \lambda \partial t} 
+ (\lambda + \mu) \frac{\partial^2 F(t, k, \lambda, \mu)}{\partial \mu \partial \lambda} 
\\
&=& 
\int_0^\lambda x  \frac{\partial^2 F(t, k-1, x, \mu)}{\partial \mu \partial \lambda}   \ dP[B \leq \lambda - x]
-  \frac{\partial}{\partial \lambda} 
\left[ r \left( \lambda^* - \lambda \right) 
\frac{\partial^2 F(t, k, \lambda, \mu)}{\partial \mu \partial \lambda} \right]
\nonumber\\
&&
+ \int_0^\mu y \frac{\partial^2 F(t, k+1, \lambda, y)}{\partial \lambda \partial \mu} \ dP[C \leq \mu - y]
- \frac{\partial}{\partial \mu} 
\left[ s \left( k \mu^* - \mu \right) 
\frac{\partial^2 F(t, k, \lambda, \mu)}{\partial \mu \partial \lambda} \right] .
\nonumber
\end{eqnarray}
\end{proposition}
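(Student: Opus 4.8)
The plan is to recognise the asserted identity as the forward Kolmogorov (Fokker--Planck) equation for the law of the Markov process $(N,\lambda,\mu)$, and to read it off from the Dynkin formula of Proposition~\ref{generators-Markovianity}. By Remark~\ref{Nlambdamu markov} the triple $(N,\lambda,\mu)$ is a piecewise-deterministic Markov process; writing $M=N+S$ in~(\ref{generator}) and using that, by~(\ref{mu}) and Lemma~\ref{lambdamut+dt}, the drift of $\mu$ carries the state factor $N_t$, its extended generator acting on a suitable $g=g(n,\lambda,\mu)$ is
\begin{align*}
\mathcal{A}g = {}& r(\lambda^*-\lambda)\,\partial_\lambda g + s(n\mu^*-\mu)\,\partial_\mu g \\
& {}+ \lambda\int_0^\infty\bigl[g(n+1,\lambda+x,\mu)-g\bigr]\,dP[B\le x] + \mu\int_0^\infty\bigl[g(n-1,\lambda,\mu+y)-g\bigr]\,dP[C\le y],
\end{align*}
where $n\mapsto n+1$ records an arrival and $n\mapsto n-1$ a service completion. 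Taking expectations in~(\ref{Dynkin}) and differentiating in the upper time limit gives the time-local identity $\frac{d}{dt}\mathbb{E}[g(N_t,\lambda_t,\mu_t)]=\mathbb{E}[\mathcal{A}g(N_t,\lambda_t,\mu_t)]$, valid under the integrability hypothesis of Proposition~\ref{generators-Markovianity}; equivalently, the sub-probability measures $\nu_k(t,d\lambda\,d\mu):=P[N_t=k,\ \lambda_t\in d\lambda,\ \mu_t\in d\mu]$ solve the adjoint (forward) equation $\partial_t\nu=\mathcal{A}^{*}\nu$ in the weak sense.

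To turn this into the stated PDE, I would test against $g(n,\lambda,\mu)=\mathbf{1}_{\{n=k\}}\,\phi(\lambda)\,\psi(\mu)$ with $\phi,\psi$ smooth and compactly supported (so that Dynkin's formula applies), integrate by parts in $\lambda$ and in $\mu$ to transfer the drift derivatives onto $\phi$ and $\psi$, and change variables $\lambda+x\mapsto\lambda$, $\mu+y\mapsto\mu$ in the two jump integrals to exhibit them as convolutions. Letting $\phi\uparrow\mathbf{1}_{(-\infty,\lambda]}$ and $\psi\uparrow\mathbf{1}_{(-\infty,\mu]}$ by monotone convergence, and then differentiating the resulting relation once in $\lambda$ and once in $\mu$, produces the forward equation for the density $p(t,k,\lambda,\mu):=\partial^2 F/\partial\mu\partial\lambda$,
\begin{align*}
\partial_t p + (\lambda+\mu)\,p = {}& -\partial_\lambda\bigl[r(\lambda^*-\lambda)p\bigr] + \int_0^\lambda x\,p(t,k-1,x,\mu)\,dP[B\le\lambda-x] \\
& {}- \partial_\mu\bigl[s(k\mu^*-\mu)p\bigr] + \int_0^\mu y\,p(t,k+1,\lambda,y)\,dP[C\le\mu-y].
\end{align*}
Substituting $p=\partial^2 F/\partial\mu\partial\lambda$ (so that $\partial_t p=\partial^3 F/\partial\mu\partial\lambda\partial t$, the loss term is $(\lambda+\mu)\,\partial^2 F/\partial\mu\partial\lambda$, and the two divergence-form drift terms are reproduced verbatim) gives exactly~(\ref{LHSFderivated=RHSFderivated}).

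The step I expect to be the main obstacle is the regularity bookkeeping that legitimises the object $\partial^3 F/\partial\mu\partial\lambda\partial t$ itself: one must know that for $t>0$ the measure $\nu_k(t,\cdot)$ is absolutely continuous with a sufficiently smooth density, and this is delicate because $\mu_t$ has an atom at $0$ on $\{N_t=0\}$ and the initial law $(0,\lambda_0,0)$ is degenerate, so the argument above is really valid on $\{\mu>0\}$ (equivalently, for $k\ge 1$), while the contribution of $\{k=0,\ \mu=0\}$ has to be described by a separate balance relation. Two further points require care: (i) the exchange of $\frac{d}{dt}$ with $\mathbb{E}[\cdot]$ and the passage from smooth test functions to indicators, which I would control by a mollification and dominated-convergence argument based on the integrability assumption of Proposition~\ref{generators-Markovianity}; and (ii) the state dependence of the $\mu$-drift coefficient $s(n\mu^*-\mu)$ --- in each equation $n=k$ is a frozen parameter, so this is harmless, but it is the reason the coefficient appears as $s(k\mu^*-\mu)$, and one must make sure the generator evaluates it at the pre-jump value of $N_t$. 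Finally, the immersion hypothesis is precisely what permits the whole derivation to be performed with respect to $\mathcal{F}^{M,S}$, and it is already incorporated in Proposition~\ref{generators-Markovianity}.
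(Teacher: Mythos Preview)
Your proposal is correct and reaches the same identity, but it is organised quite differently from the paper's own argument. The paper does not go through the generator and the adjoint at all: it writes a direct infinitesimal balance (a Chapman--Kolmogorov style relation) for $F\bigl(t+\Delta t,\,k,\,\lambda-r\Delta t(\lambda-\lambda^{*}),\,\mu-s\Delta t(\mu-k\mu^{*})\bigr)$, recording what can happen in $(t,t+\Delta t)$ --- an arrival with probability $x\,\Delta t$, a service completion with probability $y\,\Delta t$, or nothing --- then divides by $\Delta t$, lets $\Delta t\downarrow 0$, and finally differentiates the resulting relation for $F$ once in $\lambda$ and once in $\mu$ (using $P[B\le 0]=P[C\le 0]=0$ to kill boundary terms). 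Your route instead leverages Proposition~\ref{generators-Markovianity}: you read off the generator of $(N,\lambda,\mu)$, pass to the forward equation for the law via $\tfrac{d}{dt}\mathbb{E}[g]=\mathbb{E}[\mathcal{A}g]$ and duality, and then specialise to indicator-type test functions. The trade-off is clear: the paper's approach is more elementary and self-contained (no need to invoke Dynkin or talk about adjoints), while yours is more structural --- it reuses the Markov machinery already built in Section~3.2 and makes transparent why each term in~(\ref{LHSFderivated=RHSFderivated}) arises (loss rate $(\lambda+\mu)p$, drift divergences, convolution gain terms). Your version is also more honest about the regularity issues (absolute continuity of $\nu_k$, the atom at $\mu=0$ when $k=0$, exchange of limit and expectation); the paper's proof is formal on these points and simply proceeds as if the densities and derivatives exist. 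Either argument is acceptable here, and yours has the virtue of explaining the equation rather than merely verifying it.
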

\bigskip

\begin{proposition}\label{secondpart}
Taking
$$\xi(t, k, u, v) = \int_0^\infty  \int_0^\infty e^{- u \lambda} e^{- v \mu} \ \frac{\partial^2 F(t, k, \lambda, \mu)}{\partial \lambda \partial \mu} \ d\lambda \ d\mu,$$ 
we have that
\begin{eqnarray}
\label{Eq-xi}
&&
\frac{\partial \xi(t, k, u, v)}{\partial t} 
+ (u r - 1) \frac{\partial \xi(t, k, u, v)}{\partial u} + \beta(u) \frac{\partial \xi(t, k-1, u, v) }{\partial u} 
\\
&& 
+ (v s - 1)  \frac{\partial \xi(t, k, u, v)}{\partial v}
+ \gamma(v) \frac{\partial \xi(t, k+1, u, v)}{\partial v} 
=  - ( u r \lambda^* + v s k \mu^* ) \xi(t, k, u, v). 
\nonumber
\end{eqnarray}

\end{proposition}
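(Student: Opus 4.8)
The plan is to derive (\ref{Eq-xi}) by applying the double Laplace transform $(\lambda,\mu)\mapsto(u,v)$ to the identity (\ref{LHSFderivated=RHSFderivated}) of Proposition \ref{firstpart}. Writing $\rho_k(t,\lambda,\mu):=\partial^2 F(t,k,\lambda,\mu)/\partial\lambda\,\partial\mu$ for the mixed density, so that $\xi(t,k,u,v)=\int_0^\infty\int_0^\infty e^{-u\lambda-v\mu}\rho_k(t,\lambda,\mu)\,d\lambda\,d\mu=\mathbb{E}\left[e^{-u\lambda_t}e^{-v\mu_t}\mathbf{1}_{\{N_t=k\}}\right]$, I would multiply both sides of (\ref{LHSFderivated=RHSFderivated}) by $e^{-u\lambda}e^{-v\mu}$ and integrate over $(\lambda,\mu)\in(0,\infty)^2$. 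Under the integrability and decay conditions on $F$ that already underpin (\ref{LHSFderivated=RHSFderivated}), each resulting integral is a transform of $\rho_{k-1}$, $\rho_k$ or $\rho_{k+1}$, and the identity becomes a relation among the functions $\xi(t,\cdot,u,v)$.

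The left-hand side is immediate: differentiating under the integral sign sends the transform of $\partial_t\rho_k$ to $\partial_t\xi(t,k,u,v)$, while the substitutions $\lambda\,e^{-u\lambda}=-\partial_u e^{-u\lambda}$ and $\mu\,e^{-v\mu}=-\partial_v e^{-v\mu}$ turn the transform of $(\lambda+\mu)\rho_k$ into $-\partial_u\xi(t,k,u,v)-\partial_v\xi(t,k,u,v)$. On the right-hand side I would treat the four terms separately. For the two convolution terms I would invoke the Laplace--Stieltjes convolution theorem: $\int_0^\lambda x\,\rho_{k-1}(t,x,\mu)\,dP[B\le\lambda-x]$ is the convolution (in $\lambda$) of the measure $x\,\rho_{k-1}(t,x,\mu)\,dx$ with the law of $B$, so its Laplace transform in $\lambda$ factors as $\beta(u)$ times the transform of $x\,\rho_{k-1}(t,x,\mu)$, the latter being $-\partial_u$ of the transform of $\rho_{k-1}$; integrating in $\mu$ against $e^{-v\mu}$ then yields $-\beta(u)\,\partial_u\xi(t,k-1,u,v)$, and symmetrically the $C$-term gives $-\gamma(v)\,\partial_v\xi(t,k+1,u,v)$. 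For the two ``divergence'' terms $-\partial_\lambda[r(\lambda^*-\lambda)\rho_k]$ and $-\partial_\mu[s(k\mu^*-\mu)\rho_k]$ I would integrate by parts in $\lambda$, respectively $\mu$; the boundary contributions vanish because $\lambda_t\ge\min(\lambda_0,\lambda^*)>0$ forces $\rho_k$ to be supported in $\{\lambda\ge\min(\lambda_0,\lambda^*)\}$, and, on $\{N_t=k\}$ with $k\ge1$, $\mu_t$ is likewise bounded away from $0$ for $t>0$ (for $k=0$ the coefficient $k\mu^*-\mu$ vanishes at $\mu=0$), while at the upper endpoints the densities decay. Moving the $\lambda$-derivative onto $e^{-u\lambda}$, expanding $\lambda^*-\lambda$, and again using $\lambda e^{-u\lambda}=-\partial_u e^{-u\lambda}$ produces $-ur\lambda^*\,\xi(t,k,u,v)-ur\,\partial_u\xi(t,k,u,v)$, and the $\mu$-term analogously produces $-vsk\mu^*\,\xi(t,k,u,v)-vs\,\partial_v\xi(t,k,u,v)$. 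Collecting the six contributions and moving the two convolution terms to the left-hand side gives precisely (\ref{Eq-xi}).

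The work here is analytic rather than algebraic: one must justify differentiation under the integral sign, the two integrations by parts with vanishing boundary terms, and the convolution identity for the possibly atomic laws of $B$ and $C$. I expect the vanishing of the boundary terms at the lower endpoints to be the point requiring the most care; it rests on the a priori lower bounds for $\lambda_t$ and for $\mu_t$ on $\{N_t=k\}$ recalled above, together with the $L^1$-type control on $\rho_k$ and its first derivatives inherited from the hypotheses of Proposition \ref{firstpart}.
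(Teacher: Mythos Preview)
Your proposal is correct and follows essentially the same route as the paper: both apply the double Laplace transform in $(\lambda,\mu)$ to the identity of Proposition~\ref{firstpart}, handle the time derivative by differentiating under the integral, the $(\lambda+\mu)$ factor via $\lambda e^{-u\lambda}=-\partial_u e^{-u\lambda}$, the divergence terms by integration by parts, and the two convolution terms by the Laplace--Stieltjes convolution theorem, then collect terms. Your treatment is in fact more explicit than the paper's on the analytic justifications (vanishing boundary terms, differentiation under the integral), which the paper takes for granted.
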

\bigskip

\proof {\it of Theorem \ref{mainTheorem}}

Recalling the definition of $\zeta$ given in Equation  (\ref{Def-zeta}), which in turn implies that
$$\zeta(t, z, u, v) = \sum_{k=0}^\infty z^k \xi(t, k, u, v),$$
and taking into account that $F(t, -1, \lambda, \mu) = 0$, note that
\begin{eqnarray}
\sum_{k=0}^\infty z^k  \frac{\partial \xi(t, k-1, u, v)}{\partial u}  &=& 
\frac{\partial \xi(t, -1, u, v)}{\partial u} + z \sum_{k=1}^\infty z^{k-1} \frac{\partial \xi(t, k-1, u, v)}{\partial u} 
\nonumber\\
& = & z \frac{\partial}{\partial u}   \sum_{k=0}^\infty z^k \xi(t, k, u, v) = z \frac{\partial \zeta(t, z, u, v)}{\partial u}.
\nonumber
\end{eqnarray}
Moreover, 
\begin{eqnarray}
\sum_{k=0}^\infty z^k  \frac{\partial \xi(t, k+1, u, v)}{\partial v} 
&=& \frac{1}{z} \frac{\partial}{\partial v}  \sum_{k=0}^\infty z^{k+1}  \xi(t, k+1, u, v)
\nonumber\\
& = &  \frac{1}{z} \frac{\partial}{\partial v} \int \left[ \sum_{k=0}^\infty k z^{k-1} \xi(t, k, u, v) \right] \ dz
= \frac{1}{z} \frac{\partial  \zeta(t, z, u, v)}{\partial v},
\nonumber
\end{eqnarray}
and
$$\sum_{k=0}^\infty k z^k \xi(t, k, u, v) 
= z \frac{\partial 
\zeta(t, z, u, v)}{\partial z}.$$

Substituting all these in Equation (\ref{Eq-xi}), we obtain the partial differential equation (PDE) satisfied by 
$\zeta$ as
\begin{eqnarray}
\label{Eq-zeta}
&&
\frac{\partial \zeta(t, z, u, v)}{\partial t} + v s \mu^* z \frac{\partial \zeta(t, z, u, v)}{\partial z}   
+ \left( u r + z \beta(u) - 1 \right) \frac{\partial \zeta(t, z, u, v)}{\partial u} 
+ \left(v s - 1 + \frac{1}{z} \gamma(v) \right)  \frac{\partial \zeta(t, z, u, v)}{\partial v} 
\nonumber\\
&&
\hspace{2 in}
=  - u r \lambda^*  \zeta(t, z, u, v). 
\end{eqnarray}

As usual, applying the method of the characteristics, we reduce the PDE to a system of ODEs, along which 
the solutions are integrated from some initial data given on a suitable hypersurface.

To this end, let $z$, $u$ and $v$ be parameterized by $w$, $0 < w < t$, and 
with the boundary conditions $z(t) = z$, $u(t) = u$, $v(t) = v$.
A comparison with Equation (\ref{Eq-zeta}), taking into account the chain rule, 
gives us
\begin{eqnarray}
\label{chainrule}
\frac{d \zeta \left(t, z(w), u(w), v(w) \right) }{dw} 
&=& \frac{\partial \zeta}{\partial t} \frac{dt}{dw} + \frac{\partial \zeta}{\partial z} \frac{dz}{dw} 
+  \frac{\partial \zeta}{\partial u} \frac{du}{dw} +  \frac{\partial \zeta}{\partial v} \frac{dv}{dw}
\\
&=& - u(w) r \lambda^*  \zeta\left(t, z(w), u(w), v(w)\right).
\nonumber
\end{eqnarray}
This in turn implies that, for a real constant $c$, we are able to deduce that
$$\zeta\left(t, z(w), u(w), v(w)\right) = c \exp{\left\{ - r \lambda^* \int_0^w u(x) \ dx \right\}},$$
where
$c = \zeta\left(0, z(0), u(0), v(0)\right) = \zeta\left(0, z, u, v \right) 
= e^{- u(0) \lambda_0}$, 
and 
$$\zeta\left(t, z(t), u(t), v(t)\right) = c \exp{\left\{ - r \lambda^* \int_0^t u(x) \ dx \right\}},$$
which implies Equation (\ref{EquationZeta}). 

Furthermore, by Equation (\ref{Eq-zeta}) and the chain rule 
written in Equation (\ref{chainrule}), we deduce that 
$$\frac{d z(w)}{dw} = s \mu^* v(w) z(w).$$
Recalling that $0 < w < t$, and that $z(t) = z$,
$$z(w) = C \exp{ \left\{s \mu^* \int_0^w v(x) \ dx \right\}},$$
for a real constant 
$C = z \exp{ \left\{- s \mu^* \int_0^t v(x) \ dx \right\}}$, 
which allows us to write that
$$z(w) 
= z \exp{ \left\{  - s \mu^* \int_w^t v(x) \ dx \right\}}.$$

Similarly, by Equation (\ref{Eq-zeta}) and the chain rule 
written in Equation (\ref{chainrule}), 
and substituting $z(w)$, we get that 
\begin{eqnarray}
\frac{d u(w)}{dw} &=& r u(w) - 1  + z(w) \beta(u(w)) 
\nonumber\\
&=& r u(w)  - 1 + z \beta(u(w)) \exp{ \left\{  - s \mu^* \int_w^t v(x) \ dx \right\}},
\nonumber
\end{eqnarray}
and 
\begin{eqnarray}
\frac{d v(w)}{dw} &=& s v(w) - 1 + \frac{1}{z(w)} \gamma(v(w)) 
\nonumber\\
&=& s v(w) - 1 + \frac{1}{z} \gamma(v(w)) \exp{ \left\{  s \mu^* \int_w^t v(x) \ dx \right\}}.
\nonumber
\end{eqnarray}
By substituting  
$t$ for $t-w$, and by taking into account that we have a change of sign in the LHS of both $\frac{d u(w)}{dw}$ 
and $\frac{d v(w)}{dw}$, we have Equation (\ref{systemuv}), 
with boundary conditions $u(0) = u$ and $v(0) = v$. \qed
\bigskip

\begin{proposition}
Let $v(t) = w'(t) = y_2(t)$ and $w(t) = y_1(t)$, then the system of ODEs as given in Equation (\ref{systemuv}) turns out to be 
a dynamical system such that
\begin{eqnarray}
\label{equy}
u'(t) &=& - r u(t) + 1 -  z \beta \left(u(t)  \right) e^{   - s \mu^* y_1(t)}, 
\\
\label{eq1y}
y'_1(t) &=& y_2(t),
\\
\label{eq2y}
y'_2(t) &=& 1 - s y_2(t) - \frac{\gamma \left(y_2(t)\right)}{z} e^{ s \mu^* y_1(t) }.
\end{eqnarray}
\end{proposition}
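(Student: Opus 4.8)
The plan is to eliminate the nonlocal term $\int_0^t v(x)\,dx$ that appears in the system of ODEs~(\ref{systemuv}) by promoting it to an additional state variable, thereby recasting~(\ref{systemuv}) as an autonomous first-order system. Concretely, I would introduce $w(t) := \int_0^t v(x)\,dx$, so that $w$ satisfies $w'(t) = v(t)$ with the initial condition $w(0) = 0$. Relabelling $y_1(t) := w(t)$ and $y_2(t) := w'(t) = v(t)$ gives at once $y_1'(t) = y_2(t)$, which is Equation~(\ref{eq1y}), with initial data $y_1(0) = 0$ and $y_2(0) = v(0) = v$.

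Next I would substitute $y_1$ and $y_2$ into the two equations of~(\ref{systemuv}). In the first equation, $\int_0^t v(x)\,dx = y_1(t)$; solving for $u'(t)$ yields $u'(t) = -r u(t) + 1 - z\,\beta(u(t))\, e^{-s\mu^* y_1(t)}$, which is Equation~(\ref{equy}). In the second equation, using $v(t) = y_2(t)$, $v'(t) = y_2'(t)$ and again $\int_0^t v(x)\,dx = y_1(t)$, solving for $y_2'(t)$ gives $y_2'(t) = 1 - s y_2(t) - \frac{\gamma(y_2(t))}{z}\, e^{s\mu^* y_1(t)}$, which is Equation~(\ref{eq2y}). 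This produces the claimed three-dimensional system in the unknowns $(u, y_1, y_2)$, and since the right-hand sides contain no explicit dependence on $t$, it is genuinely a dynamical system, which is the content of the statement.

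The only delicate point — and the closest thing to an obstacle — is the bookkeeping of signs and arguments inherited from the proof of Theorem~\ref{mainTheorem}, in particular the substitution of $t$ for $t-w$ there that flipped the signs of $u'$ and $v'$; one must check that the exponents $\mp s\mu^* y_1(t)$ and the linear terms $-r u(t)$, $-s y_2(t)$ come out with the correct signs. Once~(\ref{systemuv}) is taken as given, however, this is a direct verification requiring no new idea. It is also worth remarking, though the proposition as stated does not ask for it, that on any interval where $\beta$ and $\gamma$ are Lipschitz one may invoke the standard existence-and-uniqueness theory for ODE systems to conclude that $(u, y_1, y_2)$ is well defined, with $y_1$ recovering $w(t) = \int_0^t v(x)\,dx$ by construction.
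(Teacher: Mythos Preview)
Your proposal is correct and follows essentially the same approach as the paper: you introduce $w(t)=\int_0^t v(x)\,dx$, relabel $y_1=w$, $y_2=w'=v$, and substitute directly into~(\ref{systemuv}) to obtain the three first-order equations. The paper proceeds identically, with the only cosmetic difference being that it first writes the intermediate second-order form $w''(t)+s w'(t)-1+\frac{\gamma(w'(t))}{z}e^{s\mu^* w(t)}=0$ before reducing to the $(y_1,y_2)$ variables.
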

\bigskip

\proof
In Theorem \ref{mainTheorem}, for the $Hawkes/sdHawkes/\infty$ system, 
we derive $\zeta(t, z, u, v)$ by Equation (\ref{EquationZeta}) where the couple 
$\left(u(\cdot), v(\cdot) \right)$ solves the system of ODEs as given in Equation (\ref{systemuv}). 
Now, let 
$$\int_0^t v(x) dx = w(t).$$
Differentiating both sides, we have
$v(t) = w'(t)$, which implies that $v'(t) = w''(t)$.
Putting these values in Equation (\ref{systemuv}), we get
\begin{eqnarray}
&&\label{eq1}
u'(t) + r u(t) -1 +  z \beta \left(u(t)  \right) e^{   - s \mu^* w(t)} = 0,
\\
&&\label{eq2}
w''(t) + s w'(t) - 1 + \frac{\gamma \left(w'(t)\right)}{z} e^{ s \mu^* w(t) } = 0.
\end{eqnarray}
To convert Equation (\ref{eq2}) into a dynamical system, let
$$y_1(t) = w(t), \qquad y_2(t) = w'(t), \qquad y'_1(t) = w'(t) = y_2(t),  \qquad y'_2(t) = w''(t)$$
which implies Equations (\ref{equy}), (\ref{eq1y})  and (\ref{eq2y}).
\qed
\bigskip

\begin{proposition}
If the random variables $B$ and $C$ follow exponential distributions 
with parameter $a$ and $b$, respectively,  then Equations (\ref{equy}), (\ref{eq1y})  and (\ref{eq2y}) become such that
\begin{eqnarray}
\label{equye}
u'(t) &=& - r u(t) + 1 -   \frac{z a}{a+u} e^{   - s \mu^* y_1(t)} ,
\\
\label{eq1ye}
y'_1(t) &=& y_2(t) ,
\\
\label{eq2ye}
y'_2(t) &=& 1 - s y_2(t) - \frac{b}{z (b + y_2)} e^{ s \mu^* y_1(t) }.
\end{eqnarray}
Furthermore, let
\begin{equation}\label{setting}
u(t)  = y_1(t),  \qquad v(t)  = y_2(t),  \qquad z(t)  = y_3(t) , \qquad \zeta(t)  = y_4(t),
\end{equation}
then we are able to write the dynamical system
\begin{equation}  
\label{dynamical-system}
\left\{ 
\begin{array}l 
\displaystyle y_1'(t) = 1 - r y_1(t) -  \frac{a y_3(t) }{a + y_1(t) } ,
\\
\\
\displaystyle y_2'(t) = 1 -  s y_2(t) - \frac{b}{y_3(t) \left( b + y_2(t) \right)} ,
\\
\\
\displaystyle y_3'(t) = s \mu^* y_2(t) y_3(t) ,
\\
\\
\displaystyle y'_4(t) = (- \lambda^* r y_1(t)) y_4(t).
\end{array}
\right.
\end{equation}

\end{proposition}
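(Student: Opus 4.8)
The plan is to prove the proposition in two independent steps. The first is a one-line specialisation of the Laplace transforms appearing in Equations (\ref{equy})--(\ref{eq2y}); the second introduces an auxiliary coordinate that turns the explicit coupling factor into a genuine state variable, after which — together with the characteristic equation for $\zeta$ already obtained in the proof of Theorem \ref{mainTheorem} — the equations close up into the first-order system (\ref{dynamical-system}).

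For the first step I would only record that an exponentially distributed $B$ with parameter $a$ has $\beta(u) = \mathbb{E}[e^{-uB}] = \int_0^\infty a\, e^{-(a+u)x}\,dx = a/(a+u)$, and likewise $\gamma(v) = b/(b+v)$ for an exponential $C$ with parameter $b$. Substituting $\beta(u(t)) = a/(a+u(t))$ into Equation (\ref{equy}) and $\gamma(y_2(t)) = b/(b+y_2(t))$ into Equation (\ref{eq2y}), and leaving Equation (\ref{eq1y}) unchanged, gives Equations (\ref{equye}), (\ref{eq1ye}) and (\ref{eq2ye}) directly, with nothing further to check.

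For the second step, adopt the relabelling (\ref{setting}), so that $y_1 = u$ and $y_2 = v$, and introduce $y_3(t)$ as the exponential coupling factor $z\, e^{- s \mu^* \int_0^t y_2(x)\,dx}$ occurring in Equations (\ref{equye}) and (\ref{eq2ye}); this is exactly the characteristic coordinate $z(\cdot)$ from the proof of Theorem \ref{mainTheorem}, for which the relation $\frac{d}{dw} z(w) = s\mu^* v(w) z(w)$ was already derived. Differentiating $y_3$ — equivalently, rereading that relation in the present time variable and using $y_1'(t) = y_2(t)$, i.e.\ Equation (\ref{eq1ye}) — gives the third line of (\ref{dynamical-system}). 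With $y_3$ in hand the coupling terms of Equations (\ref{equye}) and (\ref{eq2ye}) collapse to $a\, y_3/(a+y_1)$ and $b/(y_3(b+y_2))$ respectively, so the first two lines of (\ref{dynamical-system}) are precisely those equations rewritten. Finally, set $y_4 := \zeta$; differentiating Equation (\ref{EquationZeta}) in $t$ — equivalently, using the integrating-factor identity $\frac{d}{dw}\zeta = - r\lambda^* u(w)\,\zeta$ from Equation (\ref{chainrule}) — yields $y_4'(t) = (-\lambda^* r\, y_1(t))\, y_4(t)$, the last line of (\ref{dynamical-system}). Collecting the four equations together with the identifications (\ref{setting}) finishes the argument.

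The only place where care is needed is the bookkeeping in the second step: after the relabelling (\ref{setting}) the symbol $y_1$ no longer denotes $\int v$ but $u$ itself, so one must track where the integral $\int_0^t y_2$ hidden in $y_3$ comes from, and fix the orientation of the characteristic coordinates $z(\cdot)$ and $\zeta(\cdot)$ relative to the time direction already chosen for $u$ and $v$ in Equations (\ref{systemuv}) and (\ref{equy})--(\ref{eq2y}), so that the signs in $y_3'(t) = s\mu^* y_2(t) y_3(t)$ and $y_4'(t) = (-\lambda^* r y_1(t)) y_4(t)$ come out as stated. Once that convention is pinned down, everything else is routine substitution.
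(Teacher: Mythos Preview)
Your proposal is correct and follows essentially the same route as the paper: compute $\beta(u)=a/(a+u)$ and $\gamma(v)=b/(b+v)$ and substitute to obtain (\ref{equye})--(\ref{eq2ye}); then promote the coupling factor $z\,e^{-s\mu^*\int v}$ to a state variable (the paper writes this as $z(t)$ before relabelling to $y_3$), append the characteristic equation $\zeta' = -\lambda^* r\,u\,\zeta$, and relabel via (\ref{setting}) to arrive at (\ref{dynamical-system}). Your cautionary remark about the orientation of the characteristic time and the resulting sign in $y_3'$ is well placed --- the paper handles this tersely (its definition $z(t)=-z\,e^{-s\mu^* y_1(t)}$ carries an unexplained sign), and your explicit flag that the convention must be fixed is, if anything, more careful than the original.
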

\bigskip

\proof
Taking into account the assumptions made on the random variables $B$ and $C$, 
we get that
\begin{equation}\label{BC}
\beta(u(t)) := \mathbb E [e^{- u(t) B}] = \frac{a}{a+u(t)}, \quad \mbox{and} \quad 
\gamma(v(t)) := \mathbb E [e^{- y_2(t) C}] = \frac{b}{b+y_2(t)}.
\end{equation}
Substituting these values in Equations (\ref{equy}), (\ref{eq1y}) and  (\ref{eq2y}), we get Equations (\ref{equye}), (\ref{eq1ye})  and  (\ref{eq2ye}).
Moreover, taking
$$z(t) = - z e^{- s \mu^* y_1(t)}$$
the system given by Equations (\ref{equye}), (\ref{eq1ye})  and  (\ref{eq2ye}) turns out to be 
\begin{equation}  
\label{system}
\left\{ 
\begin{array}l 
\displaystyle z'(t) = s \mu^* v(t) z(t),
\\
\\
\displaystyle u'(t) + r u(t) -1 +  \frac{z(t) a}{a + u(t)} = 0,
\\
\\
\displaystyle v'(t) + s v(t) - 1 + \frac{b}{z(t) \left( b + v(t) \right)} = 0,
\\
\\
\displaystyle \zeta' = (- \lambda^* r u(t)) \zeta,
\end{array}
\right. ,
\nonumber
\end{equation}
with boundary conditions 
\begin{equation}
\label{boundaryconds}
z(0) = z, \qquad \qquad u(0) = u, \qquad \mbox{and} \qquad v(0) = v.
\end{equation}
Finally, by Equation (\ref{setting}), we are able to write the dynamical system as given in Equation (\ref{dynamical-system}).
\qed
\bigskip

Now, we solve the dynamical system given in Equation (\ref{dynamical-system}) numerically with the boundary conditions given in Equation (\ref{boundaryconds}), 
by the discretization steps along with numerical integrals in MATLAB software.  
For the sake of numerical illustration, the parameter values are chosen such as 
$$\lambda_0 = 2, \qquad \qquad \lambda^* = 2, \qquad \qquad r = 2,  \qquad \qquad s = 2, \qquad \qquad 
\mu^* = 2, \qquad \mbox{and} \qquad  a = b =2,$$
and 
Figure 1 shows the graphs of $\zeta(t,u,v,z)$ versus time.
\begin{figure}[ht!]
\begin{center}
\subfigure{
\includegraphics[scale=0.8]{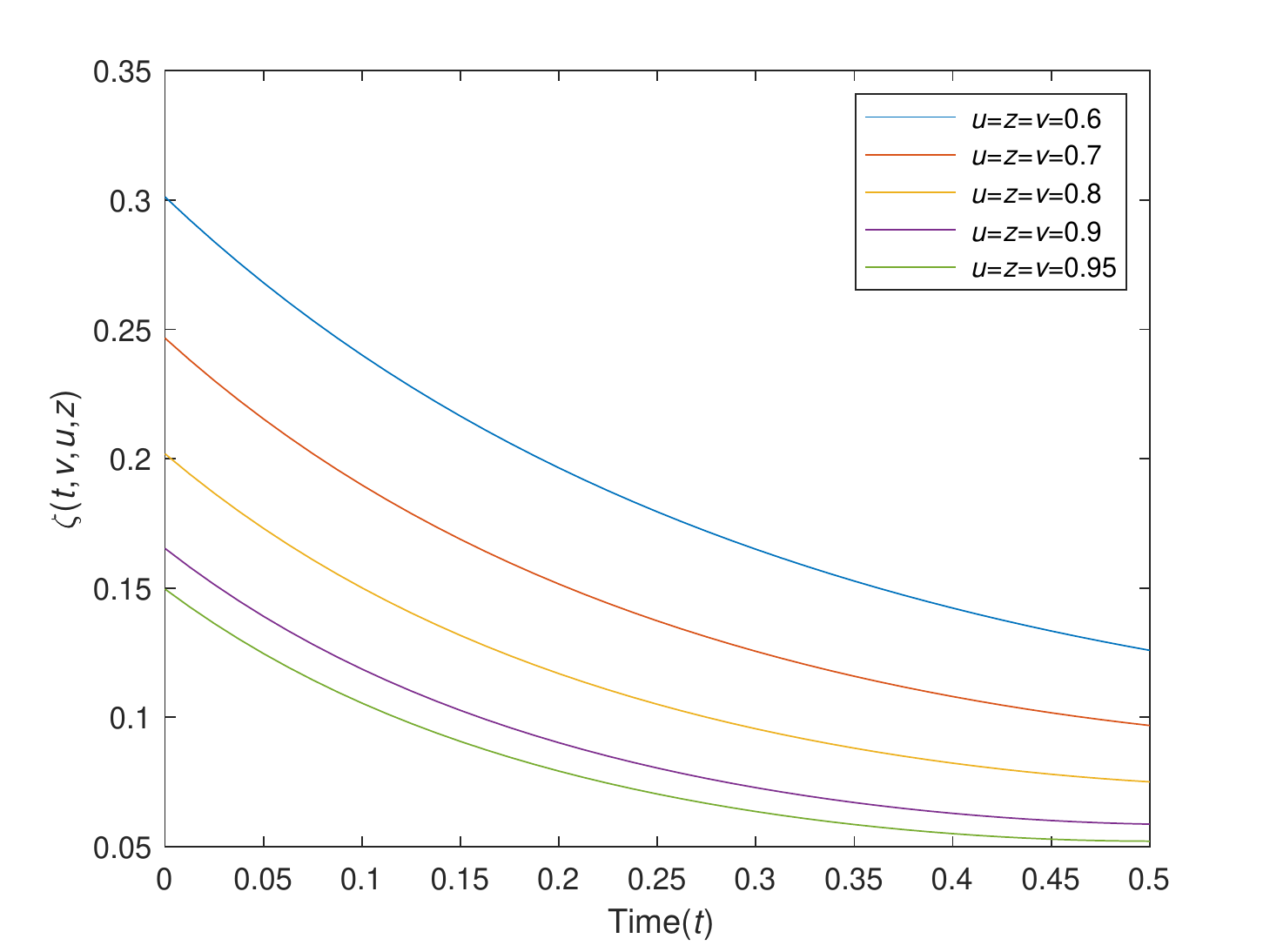}
}
\caption{$Hawkes/sdHawkes/ \infty$ system: Different graphs of $\zeta(t,u,v,z)$ for different values of $u$, $v$ and $z$.}
\end{center}
\end{figure}

Next, we are going to deduce the results for the $M/sdHawkes/\infty$
system.
\bigskip

\begin{theorem}\label{mainTheorem2}
For an $M/sdHawkes/\infty$ system, let
\begin{equation}
\label{Def-zeta-f}
\zeta(t, z, v) = \mathbb E \left[ z^{N_t} e^{-v \mu_t} \right]
\end{equation}
where $t \geq 0$, $0 \leq z \leq 1$, $v \geq 0$.
Given the constant intensity $\lambda_t = \lambda$, 
and the initial values, $(N_t, \mu_t)|_{t=0} = (0, 0)$,  we get that 
\begin{equation}
\label{zeta-f}
\zeta\left(t, z(t), v(t)\right) = \exp{ \left\{- \lambda \int_0^t \left(z(x) + 1\right)  \ dx \right\}},
\end{equation}
where
$$z(w) = z \exp{ \left\{  - s \mu^* \int_w^t v(x) \ dx \right\}},$$
and, given the boundary condition  $v(0) = v$,  $v(\cdot)$ solves the  ODE
\begin{equation}  
\label{ODEv-f}
v'(t) + s v(t) - 1 + \frac{\gamma \left(v(t)\right)}{z} e^{ s \mu^* \int_0^t v(x) \ dx } = 0.
\end{equation}
\end{theorem}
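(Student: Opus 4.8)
The plan is to obtain Theorem \ref{mainTheorem2} as the degenerate, Poisson case of Theorem \ref{mainTheorem}, in which the arrival intensity is deterministic and constant, $\lambda_t\equiv\lambda$. First I would observe that in this situation the state variable $\lambda_t$ drops out entirely, the self-excitation jump $B$ vanishes (equivalently $\beta(\cdot)\equiv1$), and the pair $(N_t,\mu_t)$ is itself a Markov process: the immersion argument behind Remark \ref{Nlambdamu markov} is trivial here, since a constant intensity furnishes the martingale $M_t-\lambda t$ with no enlargement-of-filtration subtlety. The infinitesimal generator is then the specialization of Equation (\ref{generator}),
\begin{equation*}
\mathcal{A}f(N,\mu)=\lambda\bigl[f(N+1,\mu)-f(N,\mu)\bigr]+s(N\mu^{*}-\mu)\,\frac{\partial f(N,\mu)}{\partial\mu}+\mu\int_{0}^{\infty}\bigl[f(N-1,\mu+y)-f(N,\mu)\bigr]\,dP[C\le y].
\end{equation*}

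Next I would derive the partial differential equation for $\zeta(t,z,v)=\mathbb E[z^{N_t}e^{-v\mu_t}]$ of Equation (\ref{Def-zeta-f}). Applying $\mathcal{A}$ to the map $(N,\mu)\mapsto z^{N}e^{-v\mu}$, using $\mathbb E[N_t z^{N_t}e^{-v\mu_t}]=z\,\partial_z\zeta$ and $\mathbb E[\mu_t z^{N_t}e^{-v\mu_t}]=-\partial_v\zeta$, together with $\tfrac{d}{dt}\mathbb E[g(N_t,\mu_t)]=\mathbb E[\mathcal{A}g(N_t,\mu_t)]$, yields a first-order linear PDE of the same type as Equation (\ref{Eq-zeta}) but now in $(t,z,v)$ only, with the Poisson arrivals entering through the source term:
\begin{equation*}
\frac{\partial\zeta}{\partial t}+s\mu^{*}v z\,\frac{\partial\zeta}{\partial z}+\Bigl(sv-1+\frac{\gamma(v)}{z}\Bigr)\frac{\partial\zeta}{\partial v}=\lambda(z-1)\,\zeta .
\end{equation*}
The same equation also follows by specializing Propositions \ref{firstpart} and \ref{secondpart}: the joint law $F$ then depends only on $(t,k,\mu)$, the $\lambda$-drift and $B$-jump terms vanish, and the source reduces to the arrival contribution; I would carry out either that short reduction or the direct generator computation.

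Then I would solve this PDE by the method of characteristics, exactly as in the proof of Theorem \ref{mainTheorem} but only in the variables $z$ and $v$. Parametrizing $z$ and $v$ along a curve indexed by $w\in(0,t)$ with terminal data $z(t)=z$, $v(t)=v$, the characteristic system reads $dz/dw=s\mu^{*}v(w)z(w)$, hence $z(w)=z\exp\{-s\mu^{*}\int_{w}^{t}v(x)\,dx\}$, and $dv/dw=sv(w)-1+\gamma(v(w))/z(w)$, which after the substitution $w\mapsto t-w$ (and the attendant change of sign) is precisely Equation (\ref{ODEv-f}) with $v(0)=v$. Along a characteristic the chain rule and the PDE give $\tfrac{d}{dw}\zeta(t,z(w),v(w))=\lambda\bigl(z(w)-1\bigr)\zeta(t,z(w),v(w))$, and integrating from the initial hypersurface $w=0$, where $(N_0,\mu_0)=(0,0)$ forces $\zeta(0,z(0),v(0))=1$, produces the closed form of Equation (\ref{zeta-f}).

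The main obstacle I anticipate is bookkeeping rather than anything structural: keeping the source term and all signs straight so that the exponent integrand of Equation (\ref{zeta-f}) comes out correctly under the sign convention adopted for $z(\cdot)$ in the statement, and making sure the transport coefficient $s\mu^{*}vz$ and the coefficient $sv-1+\gamma(v)/z$ are transcribed faithfully from the $Hawkes/sdHawkes$ analysis. The remaining technical points — differentiating under the expectation to pass from the generator to the $\zeta$-PDE, and solvability of the characteristic ODE — are inherited from Theorem \ref{mainTheorem} and its supporting propositions and need no separate treatment.
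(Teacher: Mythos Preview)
Your plan is essentially the paper's own: derive a first-order linear PDE for $\zeta$ and solve it by the method of characteristics. The only methodological difference is that the paper reaches the PDE via the joint distribution $F(t,k,\mu)$ and its Laplace transform $\xi$ (Propositions~\ref{firstpartT2} and~\ref{secondpartT2}), whereas you apply the generator directly to $z^{N}e^{-v\mu}$; the two routes are equivalent and yours is the more economical one.

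There is, however, a real discrepancy that you should not file under ``bookkeeping.'' Your source term $\lambda(z-1)\zeta$ is the correct one: it passes the obvious test $\zeta(t,1,0)\equiv 1$, and it is also what a careful execution of Propositions~\ref{firstpartT2}--\ref{secondpartT2} gives (the term $\lambda\,\partial F(t,k-1,\mu)/\partial\mu$ transforms to $+\lambda\,\xi(t,k-1,v)$ and hence to $+\lambda z\,\zeta$). The paper's displayed Equation~(\ref{Eq-xi-f}) carries a sign slip on that term, producing $-\lambda(z+1)\zeta$ on the right-hand side of the PDE, and it is precisely this erroneous source that integrates along characteristics to Equation~(\ref{zeta-f}). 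Your characteristic equation $d\zeta/dw=\lambda\bigl(z(w)-1\bigr)\zeta$ integrates instead to $\exp\bigl\{-\lambda\int_0^t\bigl(1-z(x)\bigr)\,dx\bigr\}$, not to $\exp\bigl\{-\lambda\int_0^t\bigl(z(x)+1\bigr)\,dx\bigr\}$. So your final claim that the integration ``produces the closed form of Equation~(\ref{zeta-f})'' will not go through: you will obtain a different (and correct) exponent, and you should flag the discrepancy with the stated formula rather than expect the signs to reconcile.
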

\bigskip

The proof of  Theorem \ref{mainTheorem2} is obtained in three steps: Proposition \ref{firstpartT2} and  Proposition \ref{secondpartT2} given below, whose proofs are in Appendix, and then the main part which is also proved in Appendix. 
\bigskip

\begin{proposition}\label{firstpartT2}
The joint distribution of $\left(N, \mu \right)$, for $t > 0$, and 
$\lambda_t = \lambda$,  positive constant, 
$$F(t, k, \lambda, \mu) = F(t, k, \mu)  \mathbb{I}_{\lambda_t = \lambda}  
= P\left[N_t = k, \mu_t \leq \mu \right]  \mathbb{I}_{\lambda_t = \lambda}$$
is such that

\begin{eqnarray}
\label{LHSF-derivated=RHSF-derivated}
&&
\frac{\partial^2 F(t, k, \mu)}{\partial \mu \partial t} 
+ \frac{\partial}{\partial \mu} \left[ s \left( k \mu^* - \mu \right) 
\frac{\partial F(t, k, \mu)}{\partial \mu} \right]
\\
& = &
\lambda \frac{\partial F(t, k - 1, \mu)}{\partial \mu} 
- (\lambda + \mu) \frac{\partial F(t, k, \mu)}{\partial \mu} 
+  \int_0^\mu y \frac{\partial F(t, k+1, y)}{\partial \mu} dP[C \leq \mu - y]. 
\nonumber
\end{eqnarray}

\end{proposition}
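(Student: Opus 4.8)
The plan is to reproduce the derivation of Proposition~\ref{firstpart} in the degenerate situation where the arrival intensity is the deterministic constant $\lambda$, so that the governing Markov process collapses to the two-dimensional process $(N,\mu)$. First I would note that, by the same Immersion-property reasoning as in Section~3 (now only the service side $S$ has to be immersed in $\mathcal{F}^{M,S}$, the Poisson input being Markov on its own), $(N,\mu)$ is a Markov process whose infinitesimal generator is obtained from (\ref{generator}) by dropping the $\lambda$-coordinate, using the state-dependent $\mu$-drift $s(N\mu^{*}-\mu)$ (cf.\ Lemma~\ref{lambdamut+dt}), and replacing $\lambda\,\mathcal{A}^{\lambda}f$ by the Poisson increment; that is,
\begin{equation*}
\mathcal{A}f(N,\mu)=\lambda\bigl[f(N+1,\mu)-f(N,\mu)\bigr]+s\,(N\mu^{*}-\mu)\,\frac{\partial f(N,\mu)}{\partial\mu}+\mu\int_{0}^{\infty}\bigl[f(N-1,\mu+y)-f(N,\mu)\bigr]\,dP[C\le y].
\end{equation*}

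Next, applying the Dynkin formula (\ref{Dynkin}) (specialised to $(N,\mu)$) to the test function $f(N,\mu)=\mathbb{I}_{\{N=k\}}\,h(\mu)$, with $h$ smooth and compactly supported in $(0,\infty)$, and differentiating in $t$, I would get $\frac{d}{dt}\,\mathbb{E}[\mathbb{I}_{\{N_t=k\}}h(\mu_t)]=\mathbb{E}[\mathcal{A}f(N_t,\mu_t)]$. Rewriting every expectation of the form $\mathbb{E}[\mathbb{I}_{\{N_t=j\}}g(\mu_t)]$ as $\int_{0}^{\infty}g(\mu)\,\frac{\partial F(t,j,\mu)}{\partial\mu}\,d\mu$ turns the left-hand side into $\int_{0}^{\infty}h(\mu)\,\frac{\partial^{2}F(t,k,\mu)}{\partial t\,\partial\mu}\,d\mu$ and the right-hand side into the sum of: the arrival-gain integral $\lambda\int_{0}^{\infty}h(\mu)\,\frac{\partial F(t,k-1,\mu)}{\partial\mu}\,d\mu$; the exit integral $-\int_{0}^{\infty}(\lambda+\mu)\,h(\mu)\,\frac{\partial F(t,k,\mu)}{\partial\mu}\,d\mu$; the drift integral $s\int_{0}^{\infty}(k\mu^{*}-\mu)\,h'(\mu)\,\frac{\partial F(t,k,\mu)}{\partial\mu}\,d\mu$; and the service-gain integral $\int_{0}^{\infty}\mu\Bigl(\int_{0}^{\infty}h(\mu+y)\,dP[C\le y]\Bigr)\frac{\partial F(t,k+1,\mu)}{\partial\mu}\,d\mu$.

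It then remains to bring the last two integrals into the form of (\ref{LHSF-derivated=RHSF-derivated}). One integration by parts in $\mu$ in the drift integral --- the boundary contributions vanishing since $h$ has compact support in $(0,\infty)$ --- produces $-\int_{0}^{\infty}h(\mu)\,\frac{\partial}{\partial\mu}\!\bigl[s(k\mu^{*}-\mu)\,\frac{\partial F(t,k,\mu)}{\partial\mu}\bigr]d\mu$, i.e.\ exactly the term sitting on the left-hand side of (\ref{LHSF-derivated=RHSF-derivated}). For the service-gain integral I would apply Fubini together with the substitution $\bar\mu=\mu+y$ and a relabelling of the inner variable, rewriting it as $\int_{0}^{\infty}h(\bar\mu)\Bigl(\int_{0}^{\bar\mu}y\,\frac{\partial F(t,k+1,y)}{\partial\mu}\,dP[C\le\bar\mu-y]\Bigr)d\bar\mu$, which is the convolution term of (\ref{LHSF-derivated=RHSF-derivated}). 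Since the resulting identity holds for every admissible $h$, the integrands must coincide, and this is precisely (\ref{LHSF-derivated=RHSF-derivated}); the factor $\mathbb{I}_{\lambda_t=\lambda}$ is carried along unchanged because $\lambda$ is deterministic.

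I expect the delicate point to be the handling of the service-gain integral: arranging the change of variables and the relabelling so that the \emph{post}-jump intensity becomes the outer integration variable while the convolution kernel $dP[C\le\bar\mu-y]$ emerges, and verifying that the Fubini interchange and the vanishing of the integration-by-parts boundary terms are legitimate --- which is where one needs sufficient integrability and decay of $\partial_{\mu}F$ (and, to be fully rigorous, a routine approximation argument, since $f(N,\mu)=\mathbb{I}_{\{N=k\}}h(\mu)$ is not itself in the smooth class on which (\ref{generator})--(\ref{Dynkin}) were stated).
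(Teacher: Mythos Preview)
Your argument is correct, but it proceeds along a genuinely different route from the paper. The paper derives (\ref{LHSF-derivated=RHSF-derivated}) by writing a one-step balance relation for the distribution function itself: it expresses $F\bigl(t+\Delta t,k,\mu-s\Delta t(\mu-k\mu^{*})\bigr)$ in terms of $F(t,\cdot,\cdot)$ via the four events (arrival, no arrival, service completion, no completion) on $(t,t+\Delta t)$, divides by $\Delta t$, lets $\Delta t\downarrow 0$ to obtain an integrated equation for $F$, and only then differentiates once in $\mu$. You instead work in the weak/adjoint formulation: you start from the generator of $(N,\mu)$, apply Dynkin to $f=\mathbb{I}_{\{N=k\}}h(\mu)$, rewrite expectations as integrals against the density $\partial_{\mu}F$, and recover the pointwise identity by varying $h$. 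The paper's approach is more elementary and in keeping with the classical queueing derivation of Kolmogorov forward equations; your approach ties the result directly to the generator machinery of Proposition~\ref{generators-Markovianity}, makes the role of the Markov property explicit, and --- modulo the approximation of the indicator and the integrability checks you already flag --- is arguably the more transparent route to a rigorous statement, since the paper's balance relation is written heuristically (the deterministic shift of the $\mu$-argument on the left-hand side encodes the drift only formally).
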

\bigskip

\begin{proposition}\label{secondpartT2}
Taking
$$\xi(t, k, v) = \int_0^\infty  e^{- v \mu} \ \frac{\partial F(t, k, \mu)}{\partial \mu} \ d\mu,$$ 
we have that
\begin{equation}
\label{Eq-xi-f}
\frac{\partial \xi(t, k,  v)}{\partial t} + (v s - 1)  \frac{\partial \xi(t, k,  v)}{\partial v} 
+ \gamma(v) \frac{\partial \xi(t, k+1, v)}{\partial v} 
=  
- \lambda \xi(t, k-1,  v) - (\lambda + v s k \mu^*)  \xi(t, k, v).
\end{equation}

\end{proposition}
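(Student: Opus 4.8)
The plan is to derive Equation~(\ref{Eq-xi-f}) from the identity of Proposition~\ref{firstpartT2} by applying, to both sides, the Laplace transform in $\mu$, i.e. the operator $g(\cdot)\mapsto\int_0^\infty e^{-v\mu}\,g(\mu)\,d\mu$, whose action on $\partial F(t,k,\mu)/\partial\mu$ is by definition $\xi(t,k,v)$. This mirrors the passage from Proposition~\ref{firstpart} to Proposition~\ref{secondpart} in the $Hawkes/sdHawkes/\infty$ case, the simplification here being that the arrival stream is Poisson with a fixed rate $\lambda$, so there is no arrival intensity left to transform and the arrival terms pass through essentially unchanged. Two elementary identities will be used repeatedly: differentiation in $t$ commutes with the $\mu$-integral, and $\int_0^\infty \mu\,e^{-v\mu}g(\mu)\,d\mu=-\partial_v\int_0^\infty e^{-v\mu}g(\mu)\,d\mu$, so that a factor $\mu$ under the transform turns into $-\partial/\partial v$ outside it.

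First I would transform the left-hand side of the identity in Proposition~\ref{firstpartT2}. The term $\partial^2F/\partial\mu\,\partial t$ gives $\partial\xi(t,k,v)/\partial t$. For the divergence-form drift term $\partial_\mu\big[s(k\mu^*-\mu)\,\partial F/\partial\mu\big]$ I would integrate by parts in $\mu$; granting that the boundary contributions at $\mu=0$ and $\mu\to\infty$ vanish, the surviving bulk term is $v\int_0^\infty e^{-v\mu}s(k\mu^*-\mu)\,\partial F/\partial\mu\,d\mu$, which by the moment identity equals $vsk\mu^*\,\xi(t,k,v)+vs\,\partial\xi(t,k,v)/\partial v$.

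Next I would transform the right-hand side. The term $\lambda\,\partial F(t,k-1,\mu)/\partial\mu$ maps directly to $\lambda\,\xi(t,k-1,v)$, and $-(\lambda+\mu)\,\partial F(t,k,\mu)/\partial\mu$ maps to $-\lambda\,\xi(t,k,v)+\partial\xi(t,k,v)/\partial v$, the extra $v$-derivative coming from the factor $\mu$. For the Stieltjes-convolution term $\int_0^\mu y\,\partial F(t,k+1,y)/\partial y\;dP[C\le\mu-y]$ I would invoke the convolution theorem for Laplace transforms: its transform factorizes as the transform of $y\mapsto y\,\partial F(t,k+1,y)/\partial y$, equal to $-\partial\xi(t,k+1,v)/\partial v$, times the transform of the measure $dP[C\le\cdot\,]$, which is exactly $\gamma(v)=\mathbb E[e^{-vC}]$; hence this term contributes $-\gamma(v)\,\partial\xi(t,k+1,v)/\partial v$.

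Collecting everything and rearranging — moving the stray $\partial\xi(t,k,v)/\partial v$ and the convolution term to the left-hand side (so that $vs\,\partial\xi/\partial v$ becomes $(vs-1)\,\partial\xi/\partial v$) and the drift term $vsk\mu^*\,\xi(t,k,v)$ to the right — produces Equation~(\ref{Eq-xi-f}). The step I expect to be the main obstacle is not the algebra but the analytic justification: showing that the boundary terms in the integration by parts vanish and that the differentiations under the integral sign together with Fubini's theorem are legitimate. This requires controlling $\partial F(t,k,\mu)/\partial\mu$ and $\mu\,\partial F(t,k,\mu)/\partial\mu$ as $\mu\to\infty$, and handling the behaviour near $\mu=0$ with care, since for $k=0$ one has $\mu_t=0$ and the law of $\mu_t$ has an atom there, so that the mixed ``density'' $\partial F/\partial\mu$ and the convention for negative particle number ($F(t,-1,\cdot)=0$) must be interpreted appropriately on $(0,\infty)$.
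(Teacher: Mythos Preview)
Your proposal is correct and follows essentially the same route as the paper: apply the Laplace transform in $\mu$ term by term to the identity of Proposition~\ref{firstpartT2}, use integration by parts for the drift term $\partial_\mu[s(k\mu^*-\mu)\partial_\mu F]$ and the convolution theorem for the service term, and rearrange. Your explicit discussion of the analytic side (vanishing boundary terms, differentiation under the integral, the atom at $\mu=0$) goes beyond the paper, which carries out the computation purely formally without addressing these points.
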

\bigskip

\begin{remark}
Note that, in queueing systems, the situation in which a service time follows a  general distribution is more general than the situation in which a service time follows a state-dependent Hawkes process. 
This observation suggests to study the results obtained in Theorem \ref{mainTheorem2} for the $M/sdHawkes/\infty$ system to try to find the relations 
with the analogous results for an $M/G/\infty$ system (see Eick et al.
\cite{EMW}). 
This is an interesting open research problem 
which could be studied in detail.
\end{remark}
\bigskip

For the $M/sdHawkes/\infty$ system, Figure 2 shows the numerical illustration of $\zeta (t,v,z)$ versus time with the parameter values
$$s = 2, \qquad \qquad \lambda = 2, \qquad \qquad b = 2,  \qquad \mbox{and} \qquad \mu^* = 2.$$

\begin{figure}[ht!]
\begin{center}
\subfigure{
\includegraphics[scale=0.8]{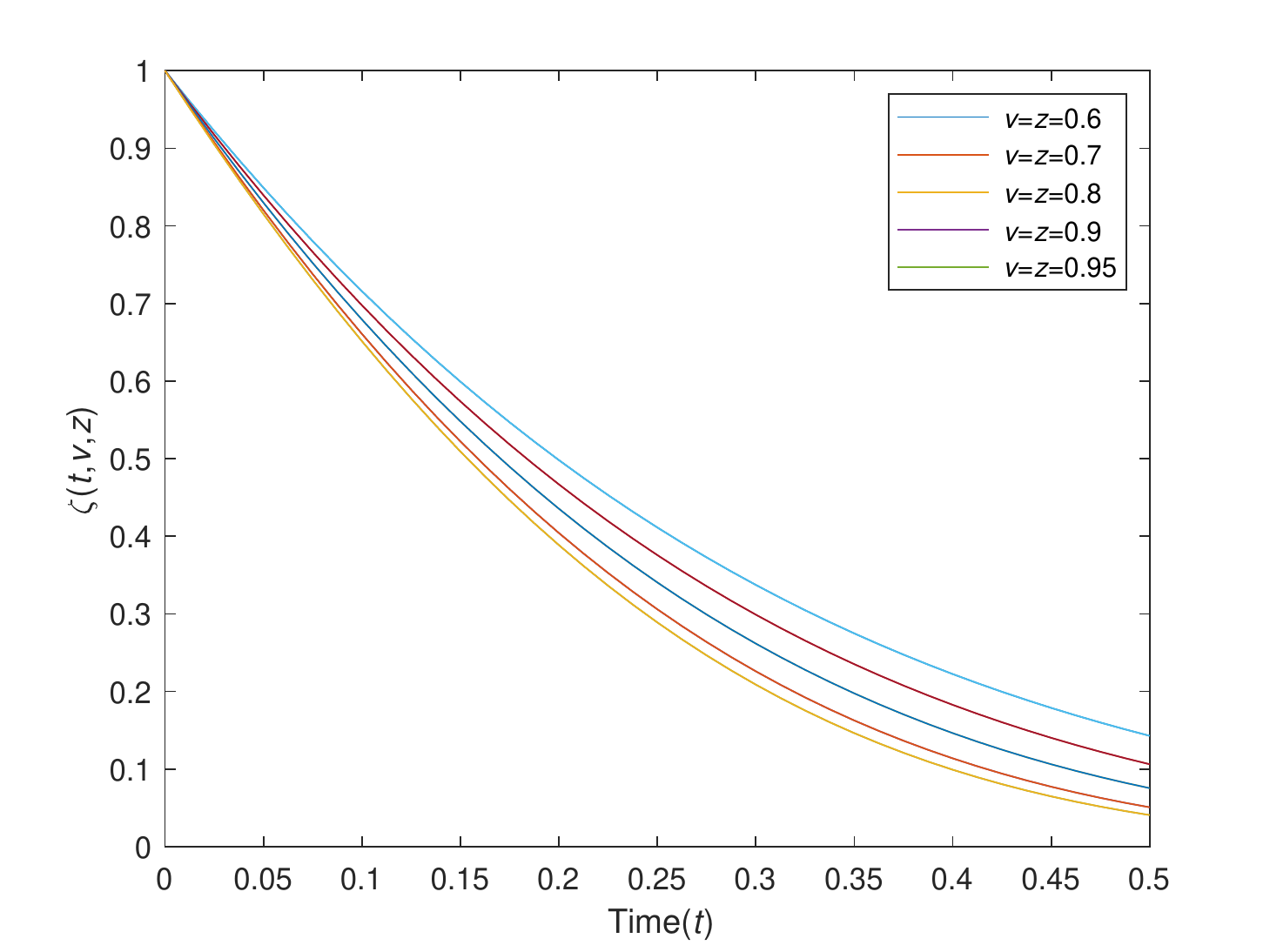}}
\caption{$M/sdHawkes/ \infty$ system: Different graphs of $\zeta(t,v,z)$ for different values of $v$ and $z$.}
\end{center}
\end{figure}


\begin{theorem} \label{corollaryT1}
By the results obtained in Koops et al. 2018, \cite{KSBM}, for an $Hawkes/M/\infty$ system, 
$$\zeta(t, z, u) = \mathbb E \left[ z^{N_t} e^{-u \lambda_t} \right] 
= e^{-u(t) \lambda_0} e^{-\lambda_0 r \int_0^t u(w) \ dw},$$
where $u(\cdot)$ solves the ODE
$$u'(w) = - r u(w) - (1 + (z-1) e^{-\mu^* w}) \beta(u(w)) + 1.$$
\end{theorem}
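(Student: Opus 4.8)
The plan is to obtain the $Hawkes/M/\infty$ system as a degenerate special case of the $Hawkes/sdHawkes/\infty$ system treated in Theorem \ref{mainTheorem}, by letting the service process lose its self-exciting character and become a plain state-dependent exponential server. Concretely, in Equation (\ref{mu}) the service intensity specialises, when there is no Hawkes feedback in the service ($C \equiv 0$, so that $\int_0^t C_u e^{-s(t-u)}dS_u = 0$) and the server runs at the stationary rate, to $\mu_t = N_t \mu^*$; equivalently one sets the service rate per customer to a constant $\mu^*$ and retains only the arrival-side Hawkes structure governed by (\ref{lambda}). Under this reduction the variable $v$ and the second ODE in (\ref{systemuv}) decouple in a controlled way, and the quantity of interest collapses from $\zeta(t,z,u,v)=\mathbb E[z^{N_t}e^{-u\lambda_t}e^{-v\mu_t}]$ to $\zeta(t,z,u)=\mathbb E[z^{N_t}e^{-u\lambda_t}]$. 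First I would redo the analogue of Proposition \ref{firstpart} and Proposition \ref{secondpart} in this setting: the joint law $F(t,k,\lambda,\mu)$ is replaced by $F(t,k,\lambda)=P[N_t=k,\lambda_t\le\lambda]$, the $\mathcal A^\mu$ term in the generator (\ref{generator}) becomes the ordinary death operator $N\mu^*[f(M,S+1,\lambda)-f(M,S,\lambda)]$, and transforming via $\xi(t,k,u)=\int_0^\infty e^{-u\lambda}\,\partial_\lambda F(t,k,\lambda)\,d\lambda$ yields a transport equation for $\xi$ with a $k\mu^*$ coefficient in place of the $vs k\mu^*\xi$ term.

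Next I would sum the $\xi$-equation against $z^k$, exactly as in the proof of Theorem \ref{mainTheorem}, using the identities $\sum_k z^k\partial_u\xi(t,k-1,u)=z\,\partial_u\zeta$, $\sum_k z^k\xi(t,k+1,u)=z^{-1}(\zeta-\xi(t,0,u))$ type manipulations and $\sum_k k z^k\xi=z\,\partial_z\zeta$, to obtain a first-order PDE for $\zeta(t,z,u)$. The $\mu^* N_t$ death mechanism contributes a term $\mu^*(1-z)\partial_z\zeta$ (rather than the $s\mu^* v z\,\partial_z\zeta$ term of (\ref{Eq-zeta})), and the arrival side contributes $(ur+z\beta(u)-1)\partial_u\zeta$ and the source $-ur\lambda_0\zeta$ on the right (note $\lambda^*=\lambda_0$ is not assumed here; the baseline stays $\lambda^*$, but with the stated initial data the constant coming out of the characteristic equation is $e^{-u(0)\lambda_0}$ and the integral term carries the coefficient $\lambda_0 r$ as in the claimed formula — I would double-check which of $\lambda_0$ and $\lambda^*$ appears where against Koops et al. \cite{KSBM}). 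Then I apply the method of characteristics: along curves parametrised by $w\in(0,t)$ with $z(t)=z$, $u(t)=u$, the characteristic ODE for $z$ is $\dot z(w)=\mu^*(1-z(w))$ — a linear equation whose solution through $z(t)=z$ is $z(w)=1+(z-1)e^{-\mu^*(t-w)}$, i.e. $z(w)=1+(z-1)e^{-\mu^* w}$ after the change $w\mapsto t-w$ — and the characteristic ODE for $u$ becomes, after substituting this $z(w)$ and flipping the sign via $w\mapsto t-w$,
\[
u'(w) = -r\,u(w) - \bigl(1+(z-1)e^{-\mu^* w}\bigr)\beta(u(w)) + 1,
\]
which is precisely the stated ODE. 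Finally, integrating $\dfrac{d}{dw}\zeta(t,z(w),u(w)) = -r\lambda_0\,u(w)\,\zeta$ along the characteristic from $0$ to $t$ and using $\zeta(0,z,u)=e^{-u(0)\lambda_0}$ gives $\zeta(t,z,u)=e^{-u(t)\lambda_0}e^{-\lambda_0 r\int_0^t u(w)\,dw}$.

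The main obstacle I anticipate is bookkeeping the degeneration cleanly rather than any deep difficulty: one must verify that dropping the service-Hawkes feedback is legitimate inside the martingale/Dynkin machinery of Proposition \ref{generators-Markovianity} (the Immersion argument still applies, and $N=M-S$ with $S$ now a Cox process directed by $\mu^* N$), and one must be careful with the $z\leftrightarrow z^{-1}$ and sign conventions when reducing the two-component characteristic system (\ref{systemuv}) to a single equation — in particular checking that the $\mu^* N_t$ death term produces the factor $1+(z-1)e^{-\mu^* w}$ and not, say, $z e^{-\mu^* w}$, and that the exponent coefficient is $\lambda_0$ consistently with the cited reference. An alternative, cleaner route, which I would mention, is to bypass the reduction and rederive the PDE for $\zeta(t,z,u)$ directly from the generator of the Markov process $(N,\lambda)$ — applying Equation (\ref{Dynkin}) to $f(M,S,\lambda)=z^{M-S}e^{-u\lambda}$ — and then run the method of characteristics from scratch; this avoids having to argue about limits of the sdHawkes model and makes the comparison with Koops et al. \cite{KSBM} transparent.
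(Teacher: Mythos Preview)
The paper does not actually prove this statement: Theorem~\ref{corollaryT1} is presented as a quotation of the result of Koops et al.~\cite{KSBM}, with the sentence immediately following it (``In Theorem~\ref{mainTheorem}, these known results have been extended\ldots'') making clear that it is recorded for comparison only. So there is no proof in the paper to compare your proposal against.

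That said, your plan is sound and would produce a self-contained derivation. Both routes you sketch --- (i) degenerating the $Hawkes/sdHawkes/\infty$ machinery by killing the service feedback ($C\equiv 0$, $\mu_0=\mu^*$, so $\mu_t=N_t\mu^*$) and then tracking how the PDE (\ref{Eq-zeta}) collapses, and (ii) writing the generator of $(N,\lambda)$ directly and applying Dynkin to $f=z^{N}e^{-u\lambda}$ --- lead to the first-order PDE
\[
\partial_t\zeta+\mu^*(z-1)\,\partial_z\zeta+\bigl(ur+z\beta(u)-1\bigr)\,\partial_u\zeta=-ur\lambda^*\zeta,
\]
whose $z$-characteristic integrates to $z(w)=1+(z-1)e^{-\mu^* w}$ after the time reversal, giving exactly the stated ODE for $u(\cdot)$. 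Route (ii) is the cleaner one and is essentially what Koops et al.\ do; route (i) requires the extra care you note about the degeneration being legitimate inside Proposition~\ref{generators-Markovianity}, but there is no real obstacle there. Your flag about $\lambda_0$ versus $\lambda^*$ is well placed: comparing with Equation~(\ref{EquationZeta}) in Theorem~\ref{mainTheorem}, the coefficient in the integral should be $\lambda^*$, and the prefactor should read $e^{-u(0)\lambda_0}$ with $u(0)=u$ the transform argument; the statement as printed appears to carry a typo inherited from the citation.
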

In Theorem \ref{mainTheorem}, these known results have been extended taking into account the state-dependent Hawkes service times.


For the $Hawkes/M/ \infty$ system, Figure 3 shows the numerical illustration of $\zeta (t,u,z)$ versus time with the parameter values
$$r = 2, \qquad \qquad \lambda_0 = 2, \qquad \qquad a = 2,  \qquad \mbox{and} \qquad \mu^* = 2.$$

\begin{figure}[ht!]
\begin{center}
\subfigure{
\includegraphics[scale=0.8]{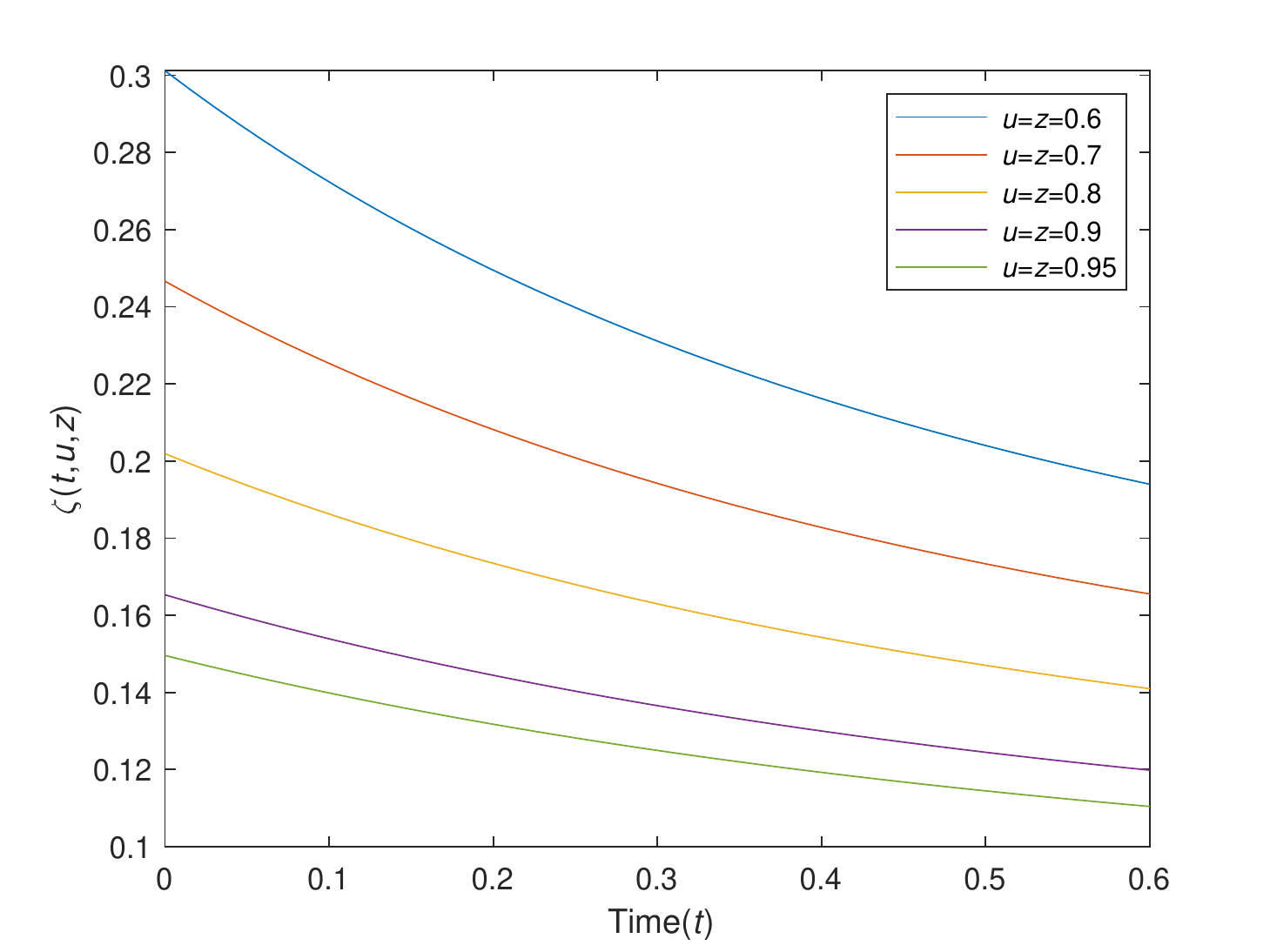}
}
\caption{$Hawkes/M/ \infty$ system: Different graphs of $\zeta(t,u,z)$ for different values of $u$ and $z$.}
\end{center}
\end{figure}

\bigskip

\begin{theorem} \label{Theorem4}
Under all the assumptions of Theorem \ref{mainTheorem}, 
if $r = s = 0$ and  $\mathbb E[B_t] = \mathbb E[C_t] = 0$, 
then the join distribution of $\left(N, \lambda, \mu \right)$, for $t > 0$ is 
$$F(t, k, \lambda, \mu) 
= F(t, k) \mathbb{I}_{\lambda_t = \lambda_0} \mathbb{I}_{\mu_t = N_t \mu_0}$$
where, for $k=0, 1, 2, \ldots$, 
\begin{eqnarray}
\label{F=P=}
F(t, k) =  \frac{1}{k!} \left(\frac{\lambda_0}{\mu_0} (1 - e^{-\mu_0 t})\right)^k 
\exp{\left\{- \frac{\lambda_0}{\mu_0} (1 - e^{-\mu_0 t}) \right\} },
\end{eqnarray}
and
\begin{eqnarray}
\label{zetaF=P}
\zeta(t, z, u, v) = e^{-u \lambda_0} \exp\left\{ - \frac{\lambda_0}{\mu_0} (1 - e^{-\mu_0 t}) (1 - z e^{-v \mu_0}) ) \right\}. 
\end{eqnarray}

\end{theorem}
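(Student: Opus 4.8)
The plan is to show that, under these hypotheses, the $Hawkes/sdHawkes/\infty$ system degenerates to an ordinary $M/M/\infty$ queue and then to read off its classical transient behaviour. \textbf{Step 1 (the intensities become deterministic).} Since $B_t\ge 0$ with $\mathbb{E}[B_t]=0$ forces $B_t\equiv 0$ a.s., and similarly $C_t\equiv 0$ a.s., I would substitute $r=0$, $B_s\equiv 0$ into Equation~(\ref{lambda}) to get $\lambda_t=\lambda^*+(\lambda_0-\lambda^*)=\lambda_0$ for all $t$, and $s=0$, $C_u\equiv 0$ into Equation~(\ref{mu}) to get $\mu_t=N_t\mu_0$ for all $t$ (so that $\lambda^*$ and $\mu^*$ drop out entirely). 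Thus $\lambda_t$ and $\mu_t$ are a.s. deterministic functions of $N_t$, which is exactly the claim $F(t,k,\lambda,\mu)=F(t,k)\,\mathbb{I}_{\lambda_t=\lambda_0}\,\mathbb{I}_{\mu_t=N_t\mu_0}$ with $F(t,k)=P[N_t=k]$.

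\textbf{Step 2 (identification with $M/M/\infty$).} With excitation function $\varphi\equiv 0$ the arrival process $M$ is Poisson of rate $\lambda_0$, and the aggregate service intensity $\mu_t=N_t\mu_0$ means each of the $N_t$ customers present completes service at rate $\mu_0$. Specialising the generator~(\ref{generator}) to $r=s=0$ and $B=C=0$ (so $\beta\equiv\gamma\equiv 1$ and the jump kernels become point masses at $0$), and using $N=M-S$ together with the Markov property of $(N,\lambda,\mu)$ (Remark~\ref{Nlambdamu markov}), one checks that $N$ is the birth--death chain on $\{0,1,2,\dots\}$ with birth rate $\lambda_0$ and death rate $n\mu_0$ in state $n$, started from $N_0=0$ — i.e. the number-in-system process of an $M/M/\infty$ queue. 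Equivalently, given the Poisson$(\lambda_0)$ arrivals, each customer occupies a server for an independent $\mathrm{Exp}(\mu_0)$ time.

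\textbf{Step 3 ($F(t,k)$ and $\zeta$).} I would then get $F(t,k)=P[N_t=k]$ by Poisson thinning: a customer arriving at time $s\le t$ is still present at $t$, independently, with probability $e^{-\mu_0(t-s)}$, so $N_t$ is Poisson with mean $\int_0^t\lambda_0 e^{-\mu_0(t-s)}\,ds=\frac{\lambda_0}{\mu_0}(1-e^{-\mu_0 t})$, which gives~(\ref{F=P=}). (Equivalently, the forward Kolmogorov equations for $F(t,k)$ become, for the probability generating function $g(t,\theta)=\sum_{k\ge 0}F(t,k)\theta^k$, the transport equation $\partial_t g=\lambda_0(\theta-1)g-\mu_0(\theta-1)\partial_\theta g$ with $g(0,\theta)=1$; the ansatz $g=\exp\{a(t)(\theta-1)\}$ reduces it to $a'=\lambda_0-\mu_0 a$, $a(0)=0$, so $a(t)=\frac{\lambda_0}{\mu_0}(1-e^{-\mu_0 t})$, and expanding in $\theta$ recovers~(\ref{F=P=}).) Finally, from the definition~(\ref{Def-zeta}), using $\lambda_t\equiv\lambda_0$ and $\mu_t=N_t\mu_0$,
$$\zeta(t,z,u,v)=\mathbb{E}\!\left[z^{N_t}e^{-u\lambda_t}e^{-v\mu_t}\right]=e^{-u\lambda_0}\,\mathbb{E}\!\left[\bigl(ze^{-v\mu_0}\bigr)^{N_t}\right]=e^{-u\lambda_0}\exp\!\Bigl\{-\frac{\lambda_0}{\mu_0}\bigl(1-e^{-\mu_0 t}\bigr)\bigl(1-ze^{-v\mu_0}\bigr)\Bigr\},$$
by the Poisson probability generating function, which is~(\ref{zetaF=P}).

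\textbf{Main obstacle.} The real difficulty is structural rather than computational: the hypotheses $\mathbb{E}[B_t]=\mathbb{E}[C_t]=0$ lie on the boundary of the standing assumption $P(B>0)=P(C>0)=1$ and make $F(t,k,\lambda,\mu)$ atomic in $\lambda$ and $\mu$, so the density-based Propositions~\ref{firstpart}--\ref{secondpart} and the ensuing system~(\ref{systemuv}) (equivalently the PDE~(\ref{Eq-zeta})) cannot be obtained by merely setting $r=s=0$, $\beta\equiv\gamma\equiv 1$ — indeed that naive substitution discards the $\mu_0$-dependence that must survive in~(\ref{F=P=}) and~(\ref{zetaF=P}). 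The clean way to handle this is precisely the collapse in Step~1: once $\lambda_t,\mu_t$ are recognised as deterministic functions of $N_t$, the whole problem reduces to the textbook transient analysis of the $M/M/\infty$ queue. (If a PDE proof is preferred, one re-runs the arguments of Propositions~\ref{firstpart}--\ref{secondpart} directly in the degenerate setting, which produces for $g$ exactly the equation $\partial_t g=\lambda_0(\theta-1)g-\mu_0(\theta-1)\partial_\theta g$ solved in Step~3.)
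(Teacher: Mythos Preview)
Your proposal is correct and follows essentially the same approach as the paper: both recognise that the hypotheses force $\lambda_t\equiv\lambda_0$ and $\mu_t=N_t\mu_0$, reduce to the $M/M/\infty$ queue, obtain the transient distribution $F(t,k)$, and then compute $\zeta$ via $e^{-u\lambda_0}\,\mathbb{E}[(ze^{-v\mu_0})^{N_t}]$. The only minor difference is that the paper writes down the forward Kolmogorov difference--differential equations for $F(t,k)$ and asserts the solution, whereas your primary derivation uses the Poisson thinning argument (with the PGF/Kolmogorov route offered as an equivalent alternative); both are standard, and your additional remarks on why the density-based machinery of Propositions~\ref{firstpart}--\ref{secondpart} does not directly specialise are apt commentary that the paper leaves implicit.
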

\begin{proof}
In Theorem \ref{mainTheorem},  
if $r = s= 0$ and $\mathbb E[B_t] = \mathbb E[C_t] = 0$, 
(Gross et al. 2019 \cite{GSTH}),
Equations (\ref{lambda}) and (\ref{mu}) are simplified and
$\label{lambda+mu-simplified}
\lambda_t = \lambda_0 
$ and $
\mu_t = N_t  \mu_0$. 
This means that the arrival process, 
$M_t$, turns to be a Poisson process with parameter $\lambda_0$ and, if there are $N_t$ customers in 
the system at time $t$, the service time follows an exponential distribution with parameter  $N_t \mu_0$, 
i.e., this is an $M/M/\infty$ system and the join distribution is 
$$F(t, k, \lambda, \mu) = P[N_t = k, \lambda_t = \lambda_0, \mu_t = N_t \mu_0] 
= P[N_t = k] \mathbb{I}_{\lambda_t = \lambda_0} \mathbb{I}_{\mu_t = N_t \mu_0}.$$
Hence, we have that 
$F(t, k, \lambda, \mu) = F(t, k) = P[N_t = k]$ and 
$$\frac{dF(t, 0)}{dt} = \lim_{\Delta t \rightarrow 0} \frac{F(t + \Delta t, 0) - F(t, 0)}{\Delta t} 
= - \lambda_0 F(t, 0) + \mu_0 F(t, 1).$$
For $k = 1, 2, \ldots$
\begin{eqnarray}
\frac{dF(t, k)}{dt}  
&=& \lambda_0 F(t, k-1) - (\lambda_0 + k \mu_0) F(t, k) + (k+1) \mu_0 F(t, k+1).
\nonumber
\end{eqnarray}
Solving the above system of difference-differential equations, 
we obtain Equation (\ref{F=P=}).
Consequently, to get Equation (\ref{zetaF=P}), as usual, note that
$$\zeta(t, z, u, v) = e^{-u \lambda_0} \mathbb E\left[ (z e^{-v \mu_0})^{N_t} \right]
= e^{-u \lambda_0} \sum_{k=0}^\infty  (z e^{-v \mu_0})^k P[N_t = k]. \qed$$
\end{proof}
\bigskip

\begin{remark}
\label{MMinfty}
In Theorem \ref{Theorem4}, $F(t,k)$, for $t \rightarrow \infty$, turns out to be
$$\displaystyle P[N_\infty = k] = \frac{1}{k!} \left( \frac{\lambda_0}{\mu_0} \right)^k e^{- \lambda_0 / \mu_0},~~ 
\mbox{for}~~ k = 0, 1, 2, \ldots. $$ 
These results of distribution of $N_t$ 
and of limiting distribution coincide with the results presented in 
Gross et al. 
\cite{GSTH}.
\end{remark}
\bigskip

\section{Concluding Remarks and Open Problems}

Main contribution of this paper is to derive the joint time-dependent distribution of the vector process $(N, \lambda, \mu)$, given by system size, arrival and service intensity processes for the $Hawkes/sdHawkes/\infty$   and 
$M/sdHawkes/\infty$ systems. 
To get this task,  we proved the Markov property for the vector process $(N, \lambda, \mu)$. Then, 
the idea is to characterize the law of the infinite server system in terms of the solution of a corresponding system of ODEs.
The methodology used is inspired by Koops et al.
\cite{KSBM}, where the authors  achieved the same results for 
a $Hawkes/M/\infty$ system. 
For a $M/M/\infty$ system, 
the above results are deduced directly following a classical procedure. 

However, we have to take into account that, in a queueing system, if the service time follows a general distribution, then this is a more general situation than that in which the service time follows a state-dependent Hawkes process. 
Hence, an interesting research problem, which could be studied in detail, is to connect the results obtained 
for the 
$M/sdHawkes/\infty$ system with the analogous results for the $M/G/\infty$ system. More precisely, the idea is to investigate the relations between the transient behaviour of 
$M/G/\infty$ system and that of  $M/sdHawkes/\infty$ system.

Taking into account that Koops et al. 
\cite{KSBM} obtained the time-dependent results for $Hawkes/M/\infty$ system, an open problem is to study if these results could be extended for a $Hawkes/G/\infty$ system. 
Instead of 
obtaining a solution by solving ODEs (as in Theorem \ref{mainTheorem}), 
several methods are possible, for instance, 
methods such as fixed point equation in the transform domain and concepts using branching processes. 
Other future work could focus on the 
asymptotic behaviour of the distributions and the moments for the $Hawkes/sdHawkes/\infty$ system. 
\bigskip

\section{Appendix}
\subsection{Proof of Proposition \ref{firstpart}}
%
%
Taking into account the observations made in Remark \ref{Nlambdamu markov}, and the results of Lemma \ref{lambdamut+dt}, since in $(t, t + \Delta t)$, we can have a service completed with probability $y  \Delta t$, or we can have still customers  in service with 
probability $1 - y  \Delta t$, 
\begin{eqnarray}
F\left(t + \Delta t, k, \lambda - r \Delta t \left( \lambda -  \lambda^* \right), \mu - s \Delta t \left( \mu -  k \mu^* \right)\right) 
&  =& 
 \int_0^\lambda \frac{\partial F(t, k-1, x, \mu)}{\partial \lambda} P[B \leq \lambda - x] \ x  \Delta t \ dx
\nonumber\\
&&  + \int_0^\mu \frac{\partial F(t, k+1, \lambda, y)}{\partial \mu} P[C \leq \mu - y] \ y  \Delta t \ dy
\nonumber\\
&&   + \int_0^\mu \frac{\partial F(t, k, \lambda, y)}{\partial \mu} \left(1 - y  \Delta t \right) \ dy
+ \int_0^\lambda \frac{\partial F(t, k, x, \mu)}{\partial \lambda}   \left(- x  \Delta t \right) \ dx.
\nonumber
\end{eqnarray}
After elementary manipulations, we get 
\begin{eqnarray}
&&\frac{1}{\Delta t} \left[F(t + \Delta t, k, \lambda - r \Delta t \left( \lambda -  \lambda^* \right), \mu - s \Delta t \left( \mu -  k \mu^* \right)) 
- F(t, k, \lambda, \mu) \right]
\nonumber\\
&&\hspace{1 in} =
\int_0^\lambda \frac{\partial F(t, k-1, x, \mu)}{\partial \lambda} P[B \leq \lambda - x] \ x  \ dx
- \int_0^\lambda \frac{\partial F(t, k, x, \mu)}{\partial \lambda}   x  \ dx 
\nonumber\\
&&\hspace{1.2 in}   + \int_0^\mu \frac{\partial F(t, k+1, \lambda, y)}{\partial \mu} P[C \leq \mu - y] \ y  \ dy
- \int_0^\mu \frac{\partial F(t, k, \lambda, y)}{\partial \mu} y \ dy.
\nonumber
\end{eqnarray}
Then, letting $\Delta t \downarrow 0$, we have
\begin{eqnarray}
\label{Fderivated}
&&\frac{\partial F(t, k, \lambda, \mu)}{\partial t} 
- r \left( \lambda -  \lambda^* \right) \frac{\partial F(t, k, \lambda, \mu)}{\partial \lambda} 
- s \left( \mu -  k \mu^*  \right) \frac{\partial F(t, k, \lambda, \mu)}{\partial \mu} 
\nonumber\\
&& \hspace{1 in} = 
\int_0^\lambda x P[B \leq \lambda - x] \frac{\partial F(t, k-1, x, \mu)}{\partial \lambda}   \ dx
- \int_0^\lambda x \frac{\partial F(t, k, x, \mu)}{\partial \lambda}  \ dx 
\\
&& \hspace{1.2 in} +  \int_0^\mu y P[C \leq \mu - y] \frac{\partial F(t, k+1, \lambda, y)}{\partial \mu} \ dy
- \int_0^\mu y \frac{\partial F(t, k, \lambda, y)}{\partial \mu}  \ dy.
\nonumber
\end{eqnarray}
Taking the left hand side, LHS, of Equation (\ref{Fderivated}) and differentiating, successively, 
with respect to $\lambda$ and $\mu$, we get
\begin{eqnarray}
\label{LHSFderivated}
\frac{\partial^2 LHS}{\partial \mu \partial \lambda} 
=
\frac{\partial^3 F(t, k, \lambda, \mu)}{\partial \mu \partial \lambda \partial t} 
+  \frac{\partial}{\partial \lambda} 
\left[ r \left( \lambda^* - \lambda \right) 
\frac{\partial^2 F(t, k, \lambda, \mu)}{\partial \mu \partial \lambda} \right]
+ \frac{\partial}{\partial \mu} 
\left[ s \left( k \mu^* - \mu \right) 
\frac{\partial^2 F(t, k, \lambda, \mu)}{\partial \mu \partial \lambda} \right].
\end{eqnarray}
Recalling that $P[B \leq 0] = 0$, $P[C \leq 0] = 0$, then 
taking the right hand side, RHS, of Equation (\ref{Fderivated}), and, 
again, differentiating, successively, with respect to $\lambda$ and to $\mu$, we get
\begin{eqnarray}
\label{RHSFderivated}
&&\frac{\partial^2 RHS}{\partial \mu \partial \lambda}  
=
\int_0^\lambda x  \frac{\partial^2 F(t, k-1, x, \mu)}{\partial \mu \partial \lambda}   \ dP[B \leq \lambda - x]
\\
&&\qquad \qquad 
- (\lambda + \mu) \frac{\partial^2 F(t, k, \lambda, \mu)}{\partial \mu \partial \lambda} 
+  \int_0^\mu y \frac{\partial^2 F(t, k+1, \lambda, y)}{\partial \lambda \partial \mu} \ dP[C \leq \mu - y]. \qed
\nonumber
\end{eqnarray}
\bigskip

\subsection{Proof of Proposition \ref{secondpart}}

In order to transform Equation (\ref{LHSFderivated=RHSFderivated}) 
with respect to the intensities $\lambda$ and $\mu$, note that
$$\int_0^\infty  \int_0^\infty e^{- u \lambda} e^{- v \mu} \ \frac{\partial^3 F(t, k, \lambda, \mu)}{\partial \mu \partial \lambda \partial t}  \ d\lambda \ d\mu
= \frac{\partial \xi(t, k, u, v)}{\partial t},$$
\begin{eqnarray}
&&\int_0^\infty  \int_0^\infty e^{- u \lambda} e^{- v \mu} \ 
\frac{\partial}{\partial \lambda} \left[ r \left( \lambda^* - \lambda \right) \frac{\partial^2 F(t, k, \lambda, \mu)}{\partial \mu \partial \lambda} \right] \ d\lambda \ d\mu 
\nonumber\\
&& \qquad \qquad  \qquad = \int_0^\infty  
\int_0^\infty u r  e^{- u \lambda} e^{- v \mu} \ \left( \lambda^* - \lambda \right) \frac{\partial^2 F(t, k, \lambda, \mu)}{\partial \mu \partial \lambda} \ d\lambda \ d\mu
\nonumber\\
&& \qquad \qquad  \qquad = 
u r \lambda^* \xi(t, k, u, v) + u r  \frac{\partial \xi(t, k, u, v)}{\partial u},
\nonumber
\end{eqnarray}
and, analogously,
$$\int_0^\infty  \int_0^\infty e^{- u \lambda} e^{- v \mu} \ 
\frac{\partial}{\partial \mu} \left[ s \left( k \mu^*   - \mu \right) \frac{\partial^2 F(t, k, \lambda, \mu)}{\partial \mu \partial \lambda} \right] \ d\lambda \ d\mu 
= v s k \mu^*  \xi(t, k, u, v) + v s  \frac{\partial \xi(t, k, u, v)}{\partial v}.$$
Moreover,  recalling that 
$\beta(u) := \mathbb E [e^{- u B}]$ and $\gamma(v) := \mathbb E [e^{- v C}]$, 
successively we have that
\begin{eqnarray}
&&\int_0^\infty  \int_0^\infty e^{- u \lambda} e^{- v \mu} \  
\int_0^\lambda x  \frac{\partial^2 F(t, k-1, x, \mu)}{\partial \mu \partial \lambda}   \ dP[B \leq \lambda - x] \ d\lambda \ d\mu 
\nonumber\\
&& \qquad = - \int_0^\infty \int_0^\lambda e^{- u (\lambda - x)} 
\frac{\partial^2 
\xi(t, k-1, u, v)}{\partial \lambda \partial u}  \ dP[B \leq \lambda - x] \ d\lambda 
= - \beta(u) \frac{\partial \xi(t, k-1, u, v) }{\partial u},
\nonumber
\end{eqnarray}
\begin{eqnarray}
&&
\int_0^\infty  \int_0^\infty e^{- u \lambda} e^{- v \mu} \  
\int_0^\mu y \frac{\partial^2 F(t, k+1, \lambda, y)}{\partial \lambda \partial \mu} \ dP[C \leq \mu - y]\ d\lambda \ d\mu 
= - \gamma(v) \frac{\partial \xi(t, k+1, u, v)}{\partial v}, 
\nonumber
\end{eqnarray}
and 
$$- \int_0^\infty  \int_0^\infty e^{- u \lambda} e^{- v \mu} \  (\lambda + \mu) \frac{\partial^2 F(t, k, \lambda, \mu)}{\partial \mu \partial \lambda} \ d\lambda \ d\mu
=  \frac{\partial \xi(t, k, u, v)}{\partial u} + \frac{\partial \xi(t, k, u, v)}{\partial v}.$$
After all the transformations we have made and rearranging, we get the claim. \qed
\bigskip

\subsection{Proof of Proposition \ref{firstpartT2}}

Recalling the results achieved in Lemma \ref{lambdamut+dt}, since in a small time interval, $(t, t + \Delta t)$, we can have a service with probability $y  \Delta t$, or we can have still customers  in service with 
probability $1 - y  \Delta t$, 
\begin{eqnarray}
&&\hspace{-20 pt}F\left(t + \Delta t, k, \mu - s \Delta t \left( \mu -  k \mu^* \right)\right) 
\nonumber\\
&&= \lambda \Delta t F(t, k-1, \mu) -  \lambda \Delta t F(t, k, \mu)  
+ \int_0^\mu \frac{\partial F(t, k+1, y)}{\partial \mu} P[C \leq \mu - y] \ y  \Delta t \ dy
+ \int_0^\mu \frac{\partial F(t, k,  y)}{\partial \mu} \left(1 - y  \Delta t \right) \ dy
\nonumber
\end{eqnarray}
After elementary manipulations, we get 
\begin{eqnarray}
&&\hspace{-20 pt}\frac{1}{\Delta t} \left[F(t + \Delta t, k, \mu - s \Delta t \left( \mu -  k \mu^* \right)) 
- F(t, k, \mu) \right]
\nonumber\\
&&=
 \lambda F(t, k-1, \mu) -  \lambda F(t, k, \mu) 
 +  \int_0^\mu \frac{\partial F(t, k+1, y)}{\partial \mu} P[C \leq \mu - y] \ y \ dy
- \int_0^\mu \frac{\partial F(t, k,  y)}{\partial \mu}  y   \ dy.
\nonumber
\end{eqnarray}
Then, letting $\Delta t \downarrow 0$, we have
\begin{eqnarray}
\label{F-derivated}
&&\hspace{-20 pt}\frac{\partial F(t, k, \mu)}{\partial t} 
- s \left( \mu -  k \mu^*  \right) \frac{\partial F(t, k, \lambda, \mu)}{\partial \mu} 
\nonumber\\
&&= \lambda F(t, k-1, \mu)-\lambda F(t, k, \mu) 
+  \int_0^\mu \frac{\partial F(t, k+1, y)}{\partial \mu} P[C \leq \mu - y] \ y \ dy 
 - \int_0^\mu \frac{\partial F(t, k,  y)}{\partial \mu}  y   \ dy.
\end{eqnarray}
Taking  LHS of Equation (\ref{F-derivated}) and differentiating 
with respect to $\mu$ yields
\begin{eqnarray}
\label{LHSF-derivated}
\frac{\partial LHS}{\partial \mu} 
&=&
\frac{\partial^2 F(t, k, \mu)}{\partial \mu \partial t} 
+ \frac{\partial}{\partial \mu} \left[ s \left( k \mu^* - \mu \right) 
\frac{\partial F(t, k, \mu)}{\partial \mu} \right].
\end{eqnarray}
Recalling that $P[C \leq 0] = 0$, then 
taking RHS of Equation (\ref{F-derivated}), and, 
again, differentiating with respect to  $\mu$, we get
\begin{eqnarray}
\label{RHSF-derivated}
\frac{\partial RHS}{\partial \mu}  
& = &
\lambda \frac{\partial F(t, k - 1, \mu)}{\partial \mu} 
- (\lambda + \mu) \frac{\partial F(t, k, \mu)}{\partial \mu} 
+  \int_0^\mu y \frac{\partial F(t, k+1, y)}{\partial \mu} dP[C \leq \mu - y]. 
\end{eqnarray}
Rearranging Equation (\ref{LHSF-derivated}) and Equation (\ref{RHSF-derivated}),  we get the claim. \qed
\bigskip

\subsection{Proof of Proposition \ref{secondpartT2}}

Note that
$$\int_0^\infty  e^{- v \mu} \ \frac{\partial^2 F(t, k, \mu)}{\partial \mu \partial t}  \ d\mu
= \frac{\partial \xi(t, k, v)}{\partial t},$$
\begin{eqnarray}
&&\int_0^\infty  e^{- v \mu} \  \frac{\partial}{\partial \mu} \left[ s \left( k \mu^*   - \mu \right) \frac{\partial F(t, k, \mu)}{\partial \mu} \right]  \ d\mu
= v s k \mu^*  \xi(t, k, v) + v s  \frac{\partial \xi(t, k, v)}{\partial v}.
\nonumber
\end{eqnarray}
Therefore, by  the LHS of Equation (\ref{LHSF-derivated=RHSF-derivated}), 
\begin{equation}
\label{LHS-transformed}
\frac{\partial \xi(t, k,  v)}{\partial t} + v s  \frac{\partial \xi(t, k,  v)}{\partial v}
+  v s k \mu^* \xi(t, k, v).
\end{equation}
Recalling that 
$\gamma(v) := \mathbb E [e^{- v C}]$, 
by the RHS of Equation (\ref{LHSF-derivated=RHSF-derivated}), we have
\begin{eqnarray}
&&\int_0^\infty e^{- v \mu} \  
\int_0^\mu y \frac{\partial F(t, k+1, y)}{\partial \mu} \ dP[C \leq \mu - y] \ d\mu 
= - \gamma(v) \frac{\partial \xi(t, k+1, v)}{\partial v}, 
\nonumber
\end{eqnarray}
and 
$$- \int_0^\infty  e^{- v \mu} \  (\lambda + \mu) \frac{\partial F(t, k, \mu)}{\partial \mu} \ d\mu
= - \lambda  \xi(t, k, v) + \frac{\partial \xi(t, k, v)}{\partial v}.$$
After all the transformations we made on Equation (\ref{LHSF-derivated=RHSF-derivated}),  and by Equation (\ref{LHS-transformed}), we get the claim. \qed
\bigskip

\subsection{Proof of Theorem \ref{mainTheorem2}}

Recalling the definition of $\zeta$ given in Equation (\ref{Def-zeta-f}), which in turn implies that
$$\zeta(t, z, v) = \sum_{k=0}^\infty z^k \xi(t, k, v) = \mathbb E \left[ z^{N_t} e^{-v \mu_t} \right],$$
note that
$$\sum_{k=0}^\infty z^k  \frac{\partial \xi(t, k, v)}{\partial t}  
= \frac{\partial \zeta(t, z, v)}{\partial t},$$
$$\sum_{k=0}^\infty z^k  (v s - 1)  \frac{\partial \xi(t, k,  v)}{\partial v} 
= (v s - 1) \frac{\partial \zeta(t, z, v)}{\partial v},$$
and 
\begin{eqnarray}
\sum_{k=0}^\infty z^k  \gamma(v)  \frac{\partial \xi(t, k+1, v)}{\partial v} 
&=&  \gamma(v)  \frac{1}{z} \frac{\partial}{\partial v}  \sum_{k=0}^\infty z^{k+1}  \xi(t, k+1, v)
= \gamma(v)  \frac{1}{z} \frac{\partial  \zeta(t, z, v)}{\partial v}.
\nonumber
\end{eqnarray}
Moreover, taking into account that $F(t, -1,  \mu) = 0$
\begin{eqnarray}
\sum_{k=0}^\infty z^k \lambda \xi(t, k-1,  v) 
&=& \lambda \left( \xi(t, -1,  v) + \sum_{k=1}^\infty z^k \xi(t, k-1,  v) \right)
= \lambda  z \zeta(t, z, v),
\nonumber
\end{eqnarray}
and 
\begin{eqnarray}
\sum_{k=0}^\infty z^k (\lambda + v s k \mu^*)  \xi(t, k, v) 
&=& \lambda  \zeta(t, z, v) + v s \mu^* z \frac{\partial  \zeta(t, z, v) }{\partial z}.
\nonumber
\end{eqnarray}
Applying all these transformations on Equation (\ref{Eq-xi-f}), we find for $\zeta(t, z, v)$ the PDE
\begin{eqnarray}\label{Eq-zeta-f}
&&\frac{\partial \zeta(t, z, v)}{\partial t} + v s \mu^* z \frac{\partial  \zeta(t, z, v)}{\partial z} 
+ \left(v s - 1 + \frac{\gamma(v)}{z} \right) \frac{\partial \zeta(t, z, v)}{\partial v} 
\nonumber\\
&& \qquad \qquad = - \lambda  (z+ 1) \zeta(t, z, v).
\nonumber
\end{eqnarray}
Applying the method of the characteristics, as in Theorem \ref{mainTheorem}, we reduce a PDE to a system of ODEs.
Again, let $z$ and $v$ be parameterized by $w$, 
with the boundary conditions $z(t) = z$,  $v(t) = v$ and $0 < w < t$.  
Taking into account that for the chain rule, 
and by a comparison with Equation (\ref{Eq-zeta-f}) we get
$$\frac{d \zeta \left(t, z(w), v(w) \right) }{dw} 
= \frac{\partial \zeta}{\partial t} \frac{dt}{dw} + \frac{\partial \zeta}{\partial z} \frac{dz}{dw} 
+  \frac{\partial \zeta}{\partial v} \frac{dv}{dw} 
= - \lambda (z(w)+1)  \zeta\left(t, z(w),  v(w)\right).$$
Again, following the same steps of Theorem \ref{mainTheorem}, we get the claim. \qed
\bigskip

\section*{Acknowledgments}
The authors are grateful to the editor and the anonymous reviewers for their insightful comments,
which helped in improving the paper. One of the authors (PT) gratefully acknowledges  the support received from the Department of
Mathematics, Indian Institute of Technology Delhi, India. This research work is supported by the Department of Science and Technology, India.

\end{document}